\newcommand{\commentout}[1]{}
\newcommand{\R}{\mathbb{R}}
\newcommand {\e}  {\varepsilon}
\newcommand {\Chi} {{\bf \raise 2pt \hbox{$\chi$}} }
\newcommand {\f}   {\frac}
\newcommand {\p}   {\partial}
\newcommand {\ep}  {\epsilon}
\newcommand{\dis}{\displaystyle}
\newcommand {\proof} {\noindent {\bf Proof}. }
\newcommand{\beq}{\begin{equation}}
\newcommand{\eeq}{\end{equation}}
\newcommand{\bea} {\begin{array}{rl}}
\newcommand{\eea} {\end{array}}
\newcommand{\bepa}{\left\{ \begin{array}{l}}
\newcommand{\eepa} {\end{array}\right.}
\newtheorem{theorem}{Theorem}[section]
\newtheorem{lemma}[theorem]{Lemma}
\newtheorem{definition}[theorem]{Definition}
\newtheorem{proposition}[theorem]{Proposition}
\newcommand {\no}{\noindent}
\newcommand{\qed}{{ \hfill
                       {\unskip\kern 6pt\penalty 500 \raise -2pt\hbox{\vrule\vbox to 6pt{\hrule width 6pt
                       \vfill\hrule}\vrule} \par}   }}
\title{ {Viscosity solutions for junctions: well posedness and stability }}
 \author{
Pierre-Louis Lions$^{1}$ and Panagiotis Souganidis$^{2,3}$}
\date{\today}
\begin{document}

\maketitle
\pagestyle{plain}
%\tableofcontents
\pagenumbering{arabic}

\begin{abstract}
\no 
\smallskip

\no We introduce a notion of state-constraint viscosity solutions for one dimensional ``junction''-type problems for Hamilton-Jacobi equations with non convex coercive Hamiltonians and  study its well-posedness and stability properties. We show that viscosity approximations either select the state-constraint solution or have a unique limit. We also introduce 
another type of approximation by fattening the domain. We also make connections with existing results for convex equations and discuss extensions to time dependent and/or  multi-dimensional problems.

\end{abstract}

\noindent {\bf Key words :and phrases}  Hamilton-Jacobi equations, networks, discontinuous Hamiltonians, comparison principle.
%\end{key words}
\\
\noindent {\bf AMS Class. Numbers.} 35F21, 49L25, 35B51, 49L20.
\bigskip

%\noindent {\bf Key words:}  Dispersion evolution; Nonlocal pde; Constrained Hamilton-Jacobi equation; Effective fitness; 
%\\
%\noindent {\bf Mathematics Subject Classification (2010):} 35B25; 35F21; 92D15

%%%%%%%%%%%%%%%%%%
\section{The problem and the notion of solution.}
%\label{sec:intro}
%%%%%%%%%%%%%%%%

\no We introduce a notion of state-constraint viscosity solutions for one dimensional junction-type problems for non convex Hamilton-Jacobi equations and  study its well-posedness (comparison principle  and existence). We also investigate the stability properties of small diffusion approximations satisfying a Kirchoff property at the junction. We show that such approximations either converge to the state-constraint solution or have a unique limit. We also introduce a new type of approximations by ``fattening'' the junction, which under some assumptions on the behavior of the Hamiltonian's at the junction, also yield the state-constraint.  We also present a new and very simple proof for the uniqueness of the junction solutions introduced for quasi-convex problems by Imbert and Monneau \cite{IM}. Finally, we discuss 
extensions to time dependent and/or  multi-dimensional problems.
%we discuss dependent problems extensions to higher dimensions and time dependent problems which are work in progress.

\smallskip

\no For simplicity and due to the space limitation we concentrate here on  one-dimensional time independent problems. Our results, however, extend with some additional technicalities, to time dependent  as well as multi-dimensional ``stratified''  problems. Proofs  as well extensions to multi-dimensional  problems will appear in \cite{LS1}.
\smallskip

\no We emphasize that our results  do not require any convexity conditions on the Hamiltonians contrary to all the previous literature that is based on the control theoretical interpretation of the problem and, hence, require convexity. Among the long list of references on this topic with convex Hamiltonians, in addition to \cite{IM}, we refer to Barles and Briani and Chasseigne\cite{BBC1,  BBC2}, Barles and Chasseigne \cite{BC}, Bressan and Hong \cite{BH} and Imbert and Nguen \cite{IN}.
\smallskip

\no We consider a $K$-junction problem in the domain  $I:=\bigcup_{i=1}^{K} I_i$ and junction $\{0\}$, where, for $i=1,\ldots,K$,  $I_i :=(-a_i, 0)$ and $a_i \in [-\infty, 0)$. 
We work with functions $u \in C({\bar I}; \R)$ and,  for $x=(x_1,\ldots,x_K) \in \bar I$, we write $u_i(x_i)=u(0, \ldots, x_i,\dots, 0)$;  when possible, to simplify the writing,  we drop the subscript on $u_i$ and simply write $u(x_i)$. %identify the argument.
% if $x_j=0$ for $j \neq i$ and $x_i \in {\bar I}_i$; 
 We also use the notation $u_{x_i}$ and $u_{x_i x_i}$ for the first and second derivatives of $u_i$ in $x_i$.   Finally, to avoid unnecessarily long statements, % repetitions,  
we do not repeat, unless needed, that $i=1,\dots,K$.
 \smallskip

For the Hamiltonians $H_i \in C(\R \times  I; \R)$ %with   $i=1,\ldots,K$,   
we assume that, for each  $i$,  %,\ldots,K$,
\begin{equation}\label{takis1}
H_i  \text{ is coercive, that is} \ H_i (p_i,x_i) \to \infty \  \text{as \ $|p_i| \to \infty$ \ uniformly on $\bar I_i$.}
\end{equation} 
%and
%\begin{equation}\label{takis2}
%H_i  \ \text{has no flat parts and finitely many minima};
%\end{equation}
%the assumption that $H_i$ has no flat parts can be easily removed by a density argument, while, at the expense of some technicalities, it is not necessary to assume that there are infinitely minima.
%\smallskip

Next we present the definitions of the state-constraint sub- and super-solutions.

\begin{definition}\label{takis0} (i) $u \in C({\bar I}; \R)$  is a state-constraint sub-solution to the junction problem if  
\begin{equation}\label{takis3.1}
u_i+H_i(u_{x_i}, x_i)\leq 0 \ \text{in} \ I_i \ \text{ for each $i$.} %$i=1,\ldots, K$}.
\end{equation}
(ii)~ $u \in C({\bar I}; \R)$ is a state-constraint super-solution to the junction problem if %to junction problem
\begin{equation}\label{takis3.2}
%\begin{cases}
 u_i+H_i(u_{x_i}, x_i)\geq 0 \ \text{in} \ I_i \  \text{for each $i$},  %and} \\[1.5mm] %$i=1,\ldots, K$, and}\\[1mm]
% u(0) + \underset {1\leq i \leq K}\max  H_i(u_{x_i},0) \geq 0.
%\end{cases}
\end{equation}
and
\begin{equation}\label{takis3.3}
%begin{cases}
% u_i+H_i(u_{x_i}, x_i)\geq 0 \ \text{in} \ I_i \  \text{for each $i$  and} \\[1.5mm] %$i=1,\ldots, K$, and}\\[1mm]
 u(0) + \underset {1\leq i \leq K}\max  H_i(u_{x_i},0) \geq 0.
%\end{cases}
\end{equation}

(iii)~$u \in C({\bar I}; \R)$ is a solution if it is both sub-and super-solution.
\end{definition}
The super-solution inequality at the junction is interpreted in the viscosity sense, that is  if, for $\phi \in C^1(I)\cap C^{0,1}(\bar I)$, $u-\phi$ has a (strict local) minimum at $x=0$, then $ u(0) + \underset {1\leq i \leq K}\max H_i(\phi_{x_i}(0),0) \geq 0.$
\smallskip

The definition of the state constrain solution says that $u$ is a solution if it is a viscosity solution in $I$ and a constrained super-solution in $\bar I_i$ for at least one $i$. %=1,\ldots,K$. 
\smallskip

We remark that, for the sake of brevity, we are not precise about the boundary conditions at the end points $a_i$, which may be of any kind (Dirichlet, Neumann or state-constraint) that yields comparison for solutions in each $I_i$.
\smallskip

We also note that, without much difficulty, it is possible to study with more than one  junctions, since, as it will become apparent from the proofs below, the ``influence'' of the each junction is ``local''.
\smallskip

Finally, we denote by $u^{sc,i}\in C({\bar I}_i)$ the unique constraint-solution to 
$w+H_i(w_{x_i}, x_i)=0$ on ${\bar I}_i.$

\smallskip
%%%%%%%%%

\section{The main results}
%\label{sec:main}

%%%%%%%

We begin with the well posedness of the state-constraint solution of the junction problem.
\begin{theorem}\label{pl1} Assume \eqref{takis1}. 

(i)~If $v,u \in C(\bar I)$ are respectively sub-and super-solutions to the junction problem, then $v\leq u$ on $\bar I$. 
\vskip.075in
(ii)~There exists a unique state-constraint solution $\hat u$ of the junction problem.
\vskip.075in
(iii)~$\hat u(0)=\underset{1\leq i \leq K} \min u^{sc,i}(0),$ where $u^{sc,i}$ is the state-constraint solution to 
$w+H_i(w_{x_i}, x_i)=0$ on ${\bar I}_i.$
%Then there exists a unique solution $u\in C(\bar I)$ to the junction problem.
\end{theorem}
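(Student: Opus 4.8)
The plan is to establish part (iii) by proving the two inequalities $\hat u(0) \le \min_{1\le i\le K} u^{sc,i}(0)$ and $\hat u(0) \ge \min_{1\le i\le K} u^{sc,i}(0)$ separately, using parts (i) and (ii) together with the single-edge state-constraint theory. Throughout I treat the junction solution $\hat u$ produced in (ii) as fixed, and I exploit the fact that the max appearing in \eqref{takis3.3} makes the junction super-solution condition a ``some edge'' condition rather than an ``all edges'' condition.

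For the upper bound, I would use the maximality characterization of the state-constraint solution on a single edge: for each fixed $i$, the function $u^{sc,i}$ is the largest subsolution of $w + H_i(w_{x_i},x_i)=0$ on $\bar I_i$ compatible with the prescribed boundary data at $a_i$. Since $\hat u$ solves the junction problem, its restriction to $\bar I_i$ is in particular a subsolution in $I_i$ satisfying \eqref{takis3.1} with the same boundary condition at $a_i$; hence single-edge comparison between a subsolution and the constrained super-solution $u^{sc,i}$ (the case $K=1$ of part (i)) yields $\hat u \le u^{sc,i}$ on all of $\bar I_i$, and evaluating at the junction gives $\hat u(0) \le u^{sc,i}(0)$. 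Taking the minimum over $i$ produces $\hat u(0) \le \min_i u^{sc,i}(0)$.

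For the lower bound, I would exploit the reformulation recorded after Definition \ref{takis0}: being a junction solution forces $\hat u$ to be a \emph{constrained} super-solution on $\bar I_{j}$ for at least one index $j$. The mechanism is a gluing argument at the junction. If the constrained super-solution property failed at $0$ on every edge, then for each $i$ one could select a slope $p_i$, realized by a $C^1$ test function touching $\hat u_i$ from below at $0$, with $\hat u(0) + H_i(p_i,0) < 0$; since test functions in $C^1(I)\cap C^{0,1}(\bar I)$ carry independent one-sided slopes on the different edges, these can be glued into a single $\phi$ touching $\hat u$ from below at $0$ with $\hat u(0) + \max_i H_i(\phi_{x_i}(0),0) < 0$, contradicting \eqref{takis3.3}. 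Thus the constrained super-solution inequality holds at $0$ on some edge $j$; combined with \eqref{takis3.1}--\eqref{takis3.2}, the restriction $\hat u|_{\bar I_{j}}$ is a full state-constraint solution on $\bar I_{j}$, so by the uniqueness in (ii) applied to that single edge it coincides with $u^{sc,j}$. Hence $\hat u(0) = u^{sc,j}(0) \ge \min_i u^{sc,i}(0)$, and together with the upper bound the equality follows.

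The main obstacle is the lower-bound step, specifically making the gluing argument fully rigorous: one must verify that the failure of the constrained super-solution property on edge $i$ genuinely yields a single admissible slope $p_i$ (using coercivity \eqref{takis1} to control the relevant one-sided test slopes and to rule out degenerate behavior as $|p_i|\to\infty$), and that the glued test function remains in $C^1(I)\cap C^{0,1}(\bar I)$ with a strict local minimum at $0$ so that \eqref{takis3.3} applies verbatim. The upper bound, by contrast, is essentially a direct consequence of single-edge comparison and should require only routine verification.
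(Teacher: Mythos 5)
Your proposal addresses only part (iii) of Theorem \ref{pl1}, and it does so by explicitly invoking parts (i) and (ii) as known facts. That is the genuine gap: the substance of the theorem is the comparison principle (i), and neither it nor the existence statement (ii) (which the paper obtains from (i) via Perron's method) is argued anywhere in your write-up. The paper's proof of (i) is the nontrivial step: one assumes $v(0)-u(0)=\max_{\bar I}(v-u)>0$ and doubles variables edge by edge, maximizing $v(x_i)-u(y_i)-\frac{1}{2\epsilon}(x_i-y_i+\delta)^2$ over $\bar I_i\times\bar I_i$ with a Soner-type shift $\delta=O(\epsilon)$ whose role is to force $\bar x_i<0$ even when $\bar y_i=0$. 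If some $\bar y_j<0$ one concludes by interior (single-edge) state-constraint comparison; otherwise all $\bar y_i=0$, so $y\mapsto u(y)+\frac{1}{2\epsilon}\sum_i(\bar x_i-y_i+\delta)^2$ has a minimum at the junction, \eqref{takis3.3} produces an index $j$ with $u(0)+H_j\big(\tfrac{\bar x_j+\delta}{\epsilon},0\big)\ge 0$, and this is played against the sub-solution inequality at the interior point $\bar x_j$ to reach a contradiction as $\epsilon\to 0$. None of this machinery appears in your proposal, and the single-edge comparison facts you do use (for the upper bound in (iii) and for the identification $\hat u|_{\bar I_j}=u^{sc,j}$) are themselves the case $K=1$ of (i). So, as a proof of the stated theorem, the proposal is incomplete in an essential way.

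For the part you did treat, the content is sound. The upper bound $\hat u(0)\le\min_{1\le i\le K}u^{sc,i}(0)$ is exactly the paper's argument. For the lower bound the paper merely says to repeat the comparison proof; your alternative --- negate the constrained super-solution property at $0$ on every edge, glue the resulting test functions (legitimate, since membership in $C^1(I)\cap C^{0,1}(\bar I)$ imposes no coupling between the one-sided slopes at $0$ on different edges), and contradict \eqref{takis3.3} through its $\max_i$ structure --- is correct, and is really a rigorous version of the remark following Definition \ref{takis0} that a junction solution is a constrained super-solution on at least one edge. That selection-plus-single-edge-uniqueness route is arguably cleaner than rerunning the doubling argument, and the technical point you flag (producing an admissible one-sided test slope when the constrained inequality fails, controlled via coercivity \eqref{takis1}) is standard. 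But it does not compensate for the missing proofs of (i) and (ii).
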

Since it is classical in the theory of viscosity solutions that the comparison principle yields via Perron's existence method, here we will not discuss this any further.
\smallskip

The second result is about the stability properties of ``viscous'' approximations to the junction problem. We begin with the formulation
and the well-posedness of solutions to  second-order uniformly elliptic equations on junctions satisfying a possibly nonlinear Neumann (Kirchoff-type) condition.
%on the junction.
\smallskip

We assume that  %$i=1,\ldots,K,$ 
the continuous functions   $F_i:=F(X_i,p_i,u_i,x_i) \ \text{and}  \ G:=G(p_1, \ldots, p_K, u)$ are (uniformly with respect to all the other arguments)
\begin{equation}\label{takis4}
\begin{cases}
F_i  \text{  strictly decreasing in $X_i$, nonincreasing in $u_i$, and coercive in $p_i$;}\\[1mm]
\text{$G$ strictly increasing with respect to the $p_i$'s and nonincreasing with respect to $u$,}
\end{cases}
\end{equation}
and consider the problem 
\begin{equation}\label{pl1.5}
\begin{cases}
F_i(u_{x_i x_i}, u_{x_i},x_i,u_i, x_i)=0  \  \text{in} \  I_i \ \text{for each $i$}\\[1mm]
G(u_{x_1},\ldots, u_{x_K},u)= 0 \ \text{ on} \  \{0\}.
\end{cases}
\end{equation}
%The result is:
\begin{theorem}\label{pl2} Assume \eqref{takis4}. Then \eqref{pl1.5}  has a unique solution $\hat u\in C^2( I)\cap C^{1,1}(\bar I).$
\end{theorem}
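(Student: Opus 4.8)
The plan is to obtain uniqueness from a comparison principle and existence either by Perron's method or, exploiting the genuinely one–dimensional structure, by reducing the coupling at $\{0\}$ to a single scalar equation; the interior $C^2$ regularity and the $C^{1,1}$ bound up to the junction will then be read off the equations using the uniform ellipticity.

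First I would record the consequences of \eqref{takis4} that turn each branch into an ordinary two–point boundary value problem. Since $F_i$ is strictly monotone in $X_i$, the implicit function theorem solves the interior equation for the second derivative and puts it in normal form $u_{x_ix_i}=\Phi_i(u_{x_i},u_i,x_i)$, with $\Phi_i$ continuous and inheriting from \eqref{takis4} the monotonicity in $u_i$ needed for the maximum principle, while coercivity of $F_i$ in $p_i$ becomes an a priori gradient bound. Together with the boundary condition prescribed at $-a_i$, which by hypothesis yields comparison on $I_i$, this makes the problem on each $\bar I_i$ a proper, uniformly elliptic scalar ODE.

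For uniqueness I would run the standard comparison argument: let $u$ be a sub- and $v$ a super-solution, put $w=u-v$, and show $\sup_{\bar I}w\le 0$. If the positive supremum were attained in an open branch $I_i$, the uniform ellipticity of $F_i$ together with its monotonicity in $u_i$ gives the usual contradiction, and attainment at an endpoint $-a_i$ is ruled out by the far boundary condition. The decisive case is attainment at the junction $\{0\}$. Here, $0$ being the common (right) endpoint of every branch, a strict boundary maximum of $w$ on some $\bar I_i$ forces, via Hopf's lemma applied to the linear equation solved by $w$, the strict sign $u_{x_i}(0)>v_{x_i}(0)$ (if the maximum is not strict on a branch it is attained in that branch's interior, which is the already–settled case). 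Feeding these strict gradient gaps into the junction relation and using that $G$ is strictly increasing in each $p_i$, together with its monotonicity in $u$, contradicts the pair of junction inequalities carried by the sub- and super-solution. Making the Kirchhoff condition and the interior equations cooperate through Hopf's lemma at $\{0\}$ is the step I expect to require the most care.

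For existence I would use the one–dimensional structure. For each $c\in\R$ solve the branch BVPs with $u_i(0)=c$ and the prescribed condition at $-a_i$, obtaining a unique $v_i^{\,c}\in C^2(I_i)\cap C^{1,1}(\bar I_i)$ by the continuity (or Leray–Schauder) method with the comparison above as a priori bound, and set $\Theta(c):=G\big((v_1^{\,c})_{x_1}(0),\dots,(v_K^{\,c})_{x_K}(0),c\big)$. Continuous dependence of the BVP solutions on $c$ makes $\Theta$ continuous, while barriers at $\{0\}$ force the junction slopes $(v_i^{\,c})_{x_i}(0)\to\pm\infty$ as $c\to\pm\infty$, producing a sign change and hence a zero $c^\ast$ of $\Theta$; gluing the $v_i^{\,c^\ast}$ yields the desired $\hat u$, whose uniqueness is in any case guaranteed by the comparison principle (alternatively one may simply invoke Perron's method with that comparison principle and the same barriers). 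Finally the announced regularity follows from the normal form: $\hat u$ and each $\hat u_{x_i}$ are bounded, so $\hat u_{x_ix_i}=\Phi_i(\hat u_{x_i},\hat u_i,x_i)$ is bounded up to $0$, giving $\hat u\in C^{1,1}(\bar I)$, and continuity of $\Phi_i$ bootstraps $\hat u\in C^2(I)$.
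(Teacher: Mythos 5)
The paper does not actually prove Theorem~\ref{pl2}: Section~\ref{sec:proofs} opens with ``The proof of Theorem~\ref{pl2} is standard so we omit it,'' so there is no argument of the authors' to compare yours against. Your outline is the expected ``standard'' one --- comparison in the interior by ellipticity and properness, Hopf's lemma at the junction combined with the strict monotonicity of $G$ in the $p_i$'s, and existence by a shooting/Perron argument --- and at that level it is the right proof. Two remarks on reading \eqref{takis4}: the properness signs as printed ($F_i$ nonincreasing in $u_i$, $G$ nonincreasing in $u$) are the wrong way around for comparison (the model case \eqref{takis5} has $F_i=-\ep X+u+H_i(p,x)$, which is \emph{increasing} in $u$), and your junction step $G(Du(0),u(0))>G(Dv(0),u(0))\geq G(Dv(0),v(0))$ only closes with the corrected monotonicity; you implicitly assume the correct signs, which is the charitable reading, but it is worth saying so explicitly. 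Also, Hopf's lemma applied to $w=u-v$ requires linearizing the $F_i$, hence locally Lipschitz dependence on $(X_i,p_i,u_i)$ and a quantitative lower ellipticity bound on the relevant compact set, neither of which follows from ``continuous and strictly decreasing in $X_i$'' alone; in one dimension this is routine to arrange but should be stated.

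The one step of your existence argument that carries real weight and is only asserted is the surjectivity of $c\mapsto\Theta(c)$. Since $G$ may be independent of $u$ (the Kirchhoff condition $\sum_i p_i=0$ is the case of interest), the sign change of $\Theta$ must come entirely from the junction slopes $(v_i^{\,c})_{x_i}(0)$, so the claim that these tend to $\pm\infty$ as $c\to\pm\infty$ is exactly where the proof lives. As stated it is in tension with your own earlier observation that coercivity of $F_i$ in $p_i$, ``uniformly in the other arguments,'' yields an a priori gradient bound --- taken literally that bound would confine $c$ to a compact interval and kill the shooting argument; in reality the Lipschitz bound degenerates with $\|v^{\,c}\|_\infty$, and one must actually produce the barriers (e.g.\ steep sub/supersolutions pinned at $x=0$) showing the slopes blow up, or verify that $c\mapsto(v_i^{\,c})_{x_i}(0)$ is nondecreasing and identify its range. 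Alternatively, Perron's method with the comparison principle you already established (interpreting the junction condition in the viscosity sense for the sub/supersolution envelopes) avoids this issue entirely and is probably what the authors have in mind; your closing regularity argument via the normal form $u_{x_ix_i}=\Phi_i(u_{x_i},u_i,x_i)$ then gives $C^2(I)\cap C^{1,1}(\bar I)$ as you say.
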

The meaning of the Neumann condition at the junction is that $G$ quantifies the ``amount'' of the diffusion that goes into each direction as well as stays at $0$.
\smallskip
 
We consider next, for each $\epsilon >0$, the problem 
\begin{equation}\label{takis5}
\begin{cases}
-\ep u^\ep _{x_i x_i} + u^\ep_i +H_i(u^\ep_{x_i},x_i)=0  \  \text{in} \  I_i,\\[1mm]
\sum_{i=1}^K u^\ep_{x_i}= 0 \ \text{ on} \  \{0\},
\end{cases}
\end{equation}
which, in view of Theorem \ref{pl2}, has a unique solution $u^\ep \in C^2( I)\cap C^{1,1}(\bar I),$ that, in addition, is bounded in $ C^{0,1}(\bar I)$ with a bound independent of the $\ep$; the uniform in $\ep$ bound is an easy consequence of the assumed coercivity of the Hamiltonian's.  
\smallskip

We remark that the particular choice of the Neumann condition plays no role in the sequel and results similar to the ones stated below will also hold true for other, even nonlinear, conditions at the junction.
\smallskip

We are interested in the behavior, as $\ep \to 0$, of the $u^\ep$'s and, in particular, in the existence of a unique limit and its relationship to the constraint solution of the first-order junction problem.
%\smallskip
%The result is:
\begin{theorem}\label{pl3}
Assume \eqref{takis1}. Then $u:=\underset{\ep \to 0}\lim u^\ep$ exists and either $u=\hat u$ or $u(0) < \hat u(0)$, $u_{x_i}(0^-)$ exists for all $i$'s and \  $\sum_{i=1}^Ku_{x_i}(0^-)= 0.$
\end{theorem}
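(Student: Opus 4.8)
The plan is to combine a soft compactness and stability step with a quantitative phase-plane analysis of the derivative at the junction. First I would use the uniform $C^{0,1}(\bar I)$ bound to extract, along a subsequence, a locally uniform limit $u \in C^{0,1}(\bar I)$, and set $c := u(0)$. On each open edge $I_i$ the term $-\ep u^\ep_{x_i x_i}$ is a standard vanishing-viscosity perturbation, so the half-relaxed limit method shows $u$ is a viscosity solution of $u_i + H_i(u_{x_i},x_i)=0$ in $I_i$; in particular $u$ satisfies \eqref{takis3.1} and is therefore a state-constraint sub-solution. Theorem \ref{pl1}(i) then gives $u \le \hat u$, hence $c \le \hat u(0) = \min_i u^{sc,i}(0)$, which sets up the dichotomy. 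If $c = \hat u(0)$, then on each edge $u$ and $\hat u$ are continuous viscosity solutions of the same first-order equation in $I_i$, agree at the junction and carry the same data at $a_i$, so the assumed single-edge comparison forces $u = \hat u$ on each $\bar I_i$, hence $u = \hat u$.

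The substantial case is $c < \hat u(0)$, where $c < u^{sc,i}(0)$ strictly for \emph{every} $i$. The key object is $v^\ep := u^\ep_{x_i}$, for which the equation itself gives $\ep\, v^\ep_{x_i} = u^\ep + H_i(v^\ep, x_i)$; near the junction this is, to leading order, the autonomous relation $\ep\, dv/dx_i \approx H_i(v,0)+c$, whose rest points are the roots of $H_i(\cdot,0)=-c$. I would argue that strict subcriticality forces the relevant root $p_i := u_{x_i}(0^-)$ to lie on a non-increasing branch, $\partial_p H_i(p_i,0)\le 0$, and that for the forward flow ($x_i \uparrow 0$) of $\ep\, dv/dx_i = H_i(v,0)+c$ such a root is attracting. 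Consequently there is no boundary layer: $v^\ep$ relaxes to $p_i$ as $x_i \uparrow 0$ uniformly in small $\ep$, which simultaneously shows that $u_{x_i}(0^-)$ exists and that $u^\ep_{x_i}(0)\to u_{x_i}(0^-)$. Passing the Kirchhoff identity $\sum_i u^\ep_{x_i}(0)=0$ to the limit then yields $\sum_i u_{x_i}(0^-)=0$, the claimed alternative. By contrast, a root on an increasing branch is repelling for this flow and produces an $O(\ep)$-layer that carries flux while the derivative does \emph{not} converge; this is what occurs on the critical edge(s) when $c=\hat u(0)$ and reconciles the viscous Kirchhoff balance with $\sum_i \hat u_{x_i}(0^-)\neq 0$.

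Two points remain where I expect the real work to lie. The main obstacle is the branch-selection/boundary-layer dichotomy above for genuinely non-convex $H_i$: one must prove rigorously that strict subcriticality forces $u$ to be one-sided $C^1$ at the junction with derivative on a non-increasing branch, and that $v^\ep$ relaxes there without a layer, none of which can be read off from a variational (control) formula as in the convex literature. I would establish this by a phase-plane/comparison argument for the first-order ODE $\ep\, dv/dx_i = u^\ep + H_i(v,x_i)$, trapping $v^\ep$ between sub- and super-solutions built from the rest points. The second point is existence of the full limit: I would show the limiting junction value is unique, either by checking that $c \mapsto \sum_i u_{x_i}(0^-)$ is strictly monotone along the selected branches (so the subcritical equation $\sum_i u_{x_i}(0^-)=0$ has at most one root, which with the single value $c=\hat u(0)$ exhausts the possibilities), or by comparing $\limsup{}^* u^\ep$ and $\liminf{}_* u^\ep$ through a junction comparison principle incorporating the Kirchhoff-limit condition; either way the subsequential limit is independent of the subsequence, so $u=\lim_{\ep\to 0}u^\ep$ exists.
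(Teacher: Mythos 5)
Your soft steps are fine and agree with the paper's setup: the uniform Lipschitz bound gives subsequential limits, each limit solves the equation on every open edge, hence is a state-constraint sub-solution, and Theorem \ref{pl1}(i) yields $u\le \hat u$, which sets up the dichotomy; the case $u(0)=\hat u(0)$ is handled by edge-by-edge Dirichlet comparison. The divergence is in the subcritical case, and there your argument is a program rather than a proof. You propose to track $v^\ep=u^\ep_{x_i}$ through the singularly perturbed ODE $\ep v^\ep_{x_i}=u^\ep+H_i(v^\ep,x_i)$, show the limiting slope sits on a decreasing branch which is attracting for the forward flow, conclude there is no boundary layer, and pass $\sum_i u^\ep_{x_i}(0)=0$ to the limit. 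As you acknowledge, the whole weight of the theorem rests on this step, and it faces concrete obstacles you do not resolve: for non-convex $H_i$ the equation $H_i(\cdot,0)=-c$ can have several roots on decreasing branches and your argument does not select among them; to run a trapping/phase-plane argument you must anchor $v^\ep$ at an interior point, which requires pointwise convergence of $u^\ep_{x_i}$ somewhere (not available from uniform convergence of $u^\ep$ alone, and no semiconcavity is at hand without convexity); and the ODE is neither autonomous nor decoupled, since $u^\ep$ appears in it and the edges are tied together through the Kirchhoff condition.

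The paper's route avoids proving any convergence of the viscous derivatives. It first records (Lemma \ref{pl7}) that every subsequential limit satisfies relaxed viscosity inequalities at the junction in which the Kirchhoff sum appears inside a $\min$ (resp.\ $\max$) with the Hamiltonian terms. Separately, Proposition \ref{pl5} analyzes the one-dimensional Dirichlet problem with datum $c<u^{sc,i}(0)$ and shows, by purely first-order arguments (comparison plus the regularity result of \cite{JS} or Lemma \ref{pl6}), that $u_{x_i}(0^-)$ exists, satisfies $u(0)+H_i(u_{x_i}(0^-),0)=0$, and lies on the decreasing part of $H_i$. Then Proposition \ref{pl8}(i) tests with $\phi^\pm$ of slopes $u_{x_i}(0^-)\pm\lambda$: because the slopes are on the decreasing branch, the Hamiltonian terms in the $\min$/$\max$ acquire strict signs, so the Kirchhoff sums must carry the inequalities, and letting $\lambda\to 0$ gives $\sum_i u_{x_i}(0^-)=0$. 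Your attracting/repelling picture is a correct heuristic for \emph{why} the increasing-branch slope of $\hat u$ is compatible with the viscous Kirchhoff balance only through an $O(\ep)$ layer, and your monotonicity idea for uniqueness of the limiting junction value is essentially the monotonicity statement in Proposition \ref{pl5}; but to make your proof complete you would either have to carry out the boundary-layer analysis rigorously (branch selection included) or, more economically, replace it by the relaxed-junction-condition plus test-function mechanism above.
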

A consequence of Theorem \ref{pl3} is that, in principle, the junction problem has a unique state-constraint solution and a possible continuum of solutions obtained as limits of problems like \eqref{takis5} with other type of possibly degenerate second order terms and different Neumann conditions.
\smallskip

Under some additional assumptions it is possible to show that we always have  $\hat u=\underset{\ep \to 0}\lim u^\ep$. Indeed suppose that, for each $i$, %=1,\ldots,d$,
\begin{equation}\label{takis2}
H_i  \ \text{has no flat parts and finitely many minima at} \ p^0_{i,1} \leq \ldots \leq p^0_{i,K_i};
\end{equation}
note  that the assumption that $H_i$ has no flat parts can be easily removed by a density argument, while, at the expense of some technicalities, it is not necessary to assume that there are only  finitely minima. 
\begin{theorem}\label{pl4}  Assume \eqref{takis1}, \eqref{takis2} and $\sum_{i=1}^K   p^0_{i,K_i}\leq 0$. Then $\hat u=\underset{\ep \to 0}\lim u^\ep$.
\end{theorem}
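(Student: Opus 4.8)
The plan is to use the dichotomy of Theorem \ref{pl3} and to rule out its second alternative under the standing hypothesis $\sum_{i=1}^K p^0_{i,K_i}\le 0$. By Theorem \ref{pl3} the limit $u=\lim_{\ep\to0}u^\ep$ exists and either $u=\hat u$, in which case there is nothing to prove, or
\[
u(0)<\hat u(0),\qquad q_i:=u_{x_i}(0^-)\ \text{exists for each }i,\qquad \sum_{i=1}^K q_i=0 .
\]
I argue by contradiction, assuming throughout that this second alternative holds, and I aim to violate $\sum_i p^0_{i,K_i}\le 0$.

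First I would record the elementary consequences of the assumption. By Theorem \ref{pl1}(iii), $\hat u(0)=\min_{1\le i\le K}u^{sc,i}(0)$, so $u(0)<u^{sc,i}(0)$ for \emph{every} $i$. Set $m_i:=\min_{p}H_i(p,0)=H_i(p^0_{i,K_i},0)$. Since $u$ solves $u+H_i(u_{x_i},x_i)=0$ on $I_i$ and is $C^1$ up to $0^-$ there with slope $q_i$, letting $x_i\to0^-$ gives $u(0)+H_i(q_i,0)=0$. On the other hand the state-constraint solution satisfies $u^{sc,i}(0)=-H_i\big((u^{sc,i})_{x_i}(0^-),0\big)\le -m_i$, whence $H_i(q_i,0)=-u(0)>-u^{sc,i}(0)\ge m_i$. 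In particular $q_i$ is not a global minimizer of $H_i(\cdot,0)$, so $q_i\ne p^0_{i,K_i}$.

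The key step — and the main obstacle — is to show that each limiting junction slope lies on the non-increasing branch of its Hamiltonian, i.e.
\[
\frac{\partial H_i}{\partial p}(q_i,0)\le 0\qquad\text{for every }i .
\]
By \eqref{takis2} and coercivity, $p\mapsto H_i(p,0)$ is strictly increasing on $(p^0_{i,K_i},+\infty)$, so this inequality is equivalent to $q_i\le p^0_{i,K_i}$, and combined with $q_i\ne p^0_{i,K_i}$ it yields the \emph{strict} bound $q_i<p^0_{i,K_i}$. This selection must be extracted from the fine behaviour of \eqref{takis5} at $\{0\}$, and this is where the machinery behind Theorem \ref{pl3} is really used. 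Heuristically: in the regime where the one-sided derivatives $q_i$ persist in the limit and the Kirchhoff relation passes to the limit as $\sum_i q_i=0$, no outgoing boundary layer for $u^\ep_{x_i}$ may form at the junction. Differentiating \eqref{takis5} shows that $p^\ep:=u^\ep_{x_i}$ solves a convection–diffusion equation with frozen convection velocity $\partial_pH_i(q_i,0)$; such a layer at the endpoint $0$ is absent precisely when the characteristics are not pushed out through the junction, that is when $\partial_pH_i(q_i,0)\le 0$. Equivalently, the limit is a flux-limited junction solution whose junction slopes necessarily sit on $\{\,\partial_pH_i(\cdot,0)\le0\,\}$ — the same branch that carries the state-constraint slope. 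Turning this boundary-layer picture into a rigorous argument is exactly the delicate point.

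Granting the key step, the conclusion is immediate: summing the strict inequalities $q_i<p^0_{i,K_i}$ over $i$ and invoking the limiting Kirchhoff relation gives
\[
0=\sum_{i=1}^K q_i<\sum_{i=1}^K p^0_{i,K_i}\le 0 ,
\]
the last inequality being the hypothesis of the theorem. This contradiction shows that the second alternative of Theorem \ref{pl3} cannot occur, so $u=\hat u$, as claimed. I expect the only substantive work to be the justification of $\partial_pH_i(q_i,0)\le0$; the remaining steps are bookkeeping built on Theorems \ref{pl1} and \ref{pl3}.
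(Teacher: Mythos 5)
Your overall architecture matches the paper's: invoke the dichotomy of Theorem \ref{pl3}, and in the second alternative combine the Kirchhoff relation $\sum_i u_{x_i}(0^-)=0$ with the fact that each junction slope lies on the decreasing branch of its Hamiltonian to contradict $\sum_i p^0_{i,K_i}\le 0$. But the step you yourself flag as ``the main obstacle'' --- that $q_i=u_{x_i}(0^-)$ lies on the non-increasing part of $H_i(\cdot,0)$, hence $q_i< p^0_{i,K_i}$ --- is left as a boundary-layer heuristic for the viscous problem \eqref{takis5}, and that heuristic is not the route the paper takes and would require substantial extra work (differentiating \eqref{takis5} and controlling a layer for $u^\ep_{x_i}$ at $\{0\}$ uniformly in $\ep$ is not a routine matter). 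This is a genuine gap: without it the proof does not close.

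The paper fills exactly this gap by a purely first-order argument, namely the last assertion of Proposition \ref{pl5}. Since $u(0)<\hat u(0)=\min_i u^{sc,i}(0)$ (Theorem \ref{pl1}(iii)), the restriction of $u$ to each $\bar I_i$ is the unique solution $u_c$ of the Dirichlet problem \eqref{takis6} with datum $c=u(0)<u^{sc,i}(0)$. Proposition \ref{pl5} shows, by comparison, that $c\mapsto u_c(0^-)$ is nondecreasing and, because the maximum of $u_c-u_{c'}$ is attained at $x=0$, that $c\mapsto u_{c,x}(0^-)$ is also nondecreasing; combined with the identity $u_c(0^-)+H(u_{c,x}(0^-),0)=0$ this forces $u_{c,x}(0^-)$ onto the decreasing part of $H(\cdot,0)$, which under \eqref{takis2} is contained in $\{p< p^0_{i,K_i}\}$. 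No analysis of the $\ep$-problem beyond Lemma \ref{pl7} and Proposition \ref{pl8} is needed. Two smaller points: your claim $q_i\ne p^0_{i,K_i}$ is deduced from $q_i$ not being a \emph{global} minimizer, but when $H_i$ has several local minima $p^0_{i,K_i}$ need not be the global one, so that inference is incomplete (the monotonicity argument above gives the strict inequality directly, or one can note that $0=\sum_i q_i\le\sum_i p^0_{i,K_i}\le 0$ forces $q_i=p^0_{i,K_i}$, a local minimum, which is incompatible with $q_i$ lying in the open decreasing set). Also, $u(0)+H_i(q_i,0)=0$ is not automatic from $u$ being $C^1$ up to $0^-$; it is part of the content of Proposition \ref{pl5} (via \cite{JS} or Lemma \ref{pl6}), so you should cite it rather than assert it.
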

A particular case that \eqref{takis2} holds is when the $H_i$'s are quasi-convex and coercive.  Then, for each $i$, there exists single minimum point at $p^0_i,$
and the condition above reduces to $\sum_{i=1}^K   p^0_i \leq 0.$ On the other hand, if  $\sum_{i=1}^K   p^0_i > 0$, we have  examples showing that $\hat u > \underset{\ep \to 0}\lim u^\ep.$

%%%%%%%%%
\section{Sketch of proofs}
\label{sec:proofs}
%%%%%%%%%%%%%%%%%%%%%%

The proof of Theorem \ref{pl2} is standard so we omit it and  
%\smallskip
we present  the one  of Theorem \ref{pl1}.
\smallskip

\begin{proof} It follows from  \eqref{takis1} that $v$ is Lipschitz continuous. In view of the comments in the previous section  about the boundary conditions at the $a_i$'s, here we assume that $v(0) -u(0) = \max_{\bar I} (u-v) >0$ and we obtain a contradiction.  %Moreover, we note that  \eqref{takis1} yields that $u$ and $v$ are actually Lipschitzcontinuous. 
\smallskip

To conclude we adapt the argument introduced in Soner \cite{So} to study state-constraint problems and we consider, for each $i$,%=1,\dots,d$ 
$\ep >0$ and some $\delta=O(\ep),$ a maximum point $({\bar x}_i, {\bar y}_i)\in \ {\bar I}_i \times {\bar I}_i $ (over  ${\bar I}_i\times {\bar I}_i $) of   
$(x_i,y_i) \to v(x_i)-u(y_i) - \frac{1}{2\ep}( {\bar x}_i -{\bar y}_i +\delta)^2.$ 
\smallskip

It follows that, as $\ep \to 0$,  ${\bar x}_i, {\bar y}_i \to 0$, and  the role of the $\delta$ above is to guarantee that, for all $i$,  $ {\bar x}_i <0$ even if ${\bar y}_i=0.$ %for all $i$'s. 
\smallskip

If, for some $j$,   ${\bar y}_j<0$, we find, using the uniqueness arguments for state-constraint viscosity solutions in ${\bar I}_j$, a contradiction to $v(0) -u(0)>0$.   %that  $u(0)=u_j(0) \leq v_j(0)=v(0)$ we also have  
\smallskip

It follows that we must have ${\bar y}_i=0$ for all $i=1,\dots, K$, that is,  $y \to v(y) + \frac{1}{2\ep} \underset {i} \sum (\bar x_i - y_i + \delta)^2$ has a minimum at $0$. Since $v$ is a super-solution,  \eqref{takis3.3} yields $ v(0) + \underset {1\leq i \leq K}\max H_i(\frac{{\bar x}_i +\delta}{\ep}, 0)\geq 0$ and, hence, for some $j$,  
$ v_j(0) +  H_j(\frac{{\bar x}_j +\delta}{\ep},0)\geq 0$.
 
\smallskip

On the other hand, since $ {\bar x}_j <0$, we also have 
$u_j({\bar x}_j) + H_j(\frac{{\bar x}_j +\delta}{\ep},{\bar x}_j )\leq 0.$ 

\smallskip

Combining the last two inequalities we find, after letting $\ep \to 0$,  that we must have $u(0)=u_j(0) \leq v_j(0)=v(0)$, which again contradicts the assumption.
\smallskip

The existence of a unique solution $\hat u$ follows from the comparison and Perron's method.
\smallskip

For the  third claim first we observe that, since $\hat u$ is a viscosity sub-solution in each $I_i$, the comparison of state-constraint solutions yields that, for each $i$,  $\hat u\leq u^{sc,i}$ on ${\bar I}_i$, and, hence, $\hat u(0) \leq \underset{1\leq i\leq K} \min u^{sc,i}(0)$.  
\smallskip 

For  the equality, we need to show that, for some $j$, $u^{sc,j}(0)\leq \hat u(0)$. This follows by repeating the proof of the comparison above. % while the first part of the proof yields $

\hskip6.5in  $\blacksquare$

\end{proof} 
\medskip

To study the limiting behavior of the $u^\ep$'s, we  investigate in detail the properties of solutions to the Dirichlet problem in each of the intervals $I_i$. For notational simplicity we omit next the dependence on $i$ and we consider, for each $c\in \R$, the boundary value problem
\begin{equation}\label{takis6}
u_c + H(u_{c,x},x)= 0 \ \text {in}  \  I:=(-a,0) \ \text{and} \   u(0)=c,
\end{equation}
and we denote by $u^{sc}$ the solution of the corresponding state constraint problem in $I$;  note that, since the real issue is the behavior near $0$, again we do not specify any  boundary condition at  $a$, which can be either Dirichlet or Neumann or state constrain so that  \eqref{takis6} is well defined. Finally, as we already mentioned earlier,  we use \eqref{takis2} is only to avoid technicalities. 
\begin{proposition}\label{pl5} 
Assume that $H$ satisfies \eqref{takis1} and  \eqref{takis2}. Then, for every $c<u^{sc}(0),$ \eqref{takis6} has a unique solution $u_c \in C^{0,1}(\bar I)$. Moreover,  $u_{c,x}(0^-)$ exists and
$u_c(0^-) + H(u_{c,x}(0^-),0)= 0$. In addition, both $u_c(0^-)$ and $u_{c,x}(0^-)$ are nondecreasing in $c$, and $u_{c,x}(0^-)$ belongs to the decreasing part of $H$.
\end{proposition}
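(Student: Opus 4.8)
The plan is to treat \eqref{takis6} as a classical Dirichlet problem for a coercive Hamilton--Jacobi equation and to read off the one--sided boundary behavior from the hypothesis $c<u^{sc}(0)$, which is exactly what forces the datum to be attained in the strong (as opposed to the relaxed viscosity) sense.

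First I would establish existence and uniqueness of $u_c\in C^{0,1}(\bar I)$ by the standard viscosity machinery: coercivity \eqref{takis1} yields a uniform Lipschitz bound, the comparison principle (proved exactly as in Theorem \ref{pl1}, with the unspecified condition at $-a$ chosen so as to give comparison on $\bar I$) gives uniqueness, and Perron's method gives existence. The genuinely new point at this stage is that the boundary value is achieved, i.e. $u_c(0^-)=c$. This is where $c<u^{sc}(0)$ enters: since $u^{sc}$ is the maximal (state--constraint) solution we have $u_c\le u^{sc}$ on $\bar I$, so no boundary layer can push $u_c(0)$ above $c$; a matching lower barrier with value $c$ at $0$, built from the decreasing branch below, then pins $u_c(0^-)=c$. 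Consequently $u_c(0^-)$ is trivially nondecreasing in $c$, and by comparison the whole family is ordered: $c_1\le c_2$ implies $u_{c_1}\le u_{c_2}$.

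The core of the statement is the existence and identification of $u_{c,x}(0^-)$. Using \eqref{takis2} (no flat parts, finitely many minima), the level set $\{p:H(p,0)=-c\}$ is finite; since $u^{sc}(0)=-\min_p H(p,0)$ (the largest value compatible with the sub--solution inequality at $0$), the hypothesis $c<u^{sc}(0)$ forces $-c>\min_p H(p,0)$, so on the strictly decreasing branch of $H(\cdot,0)$ continuously connected to the state--constraint slope as $c\uparrow u^{sc}(0)$ the equation $H(p,0)=-c$ has a unique root $p^{*}=P(c,0)$ in the interior of the decreasing part. The local candidate near $0$ is the classical solution $w$ of the ODE $w_x=P(w,x),\ w(0)=c$. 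To prove $u_{c,x}(0^-)=p^{*}$ I would bracket the slope by affine barriers: for $q$ on the decreasing branch, strict monotonicity of $H(\cdot,0)$ gives, at $x=0$,
\[
c+H(q,0)\ \begin{cases} <0 & \text{if } q>p^{*},\\ >0 & \text{if } q<p^{*},\end{cases}
\]
so, for small $\eta>0$, $x\mapsto c+qx$ is a strict sub--solution on $(-\eta,0)$ when $q>p^{*}$ and a strict super--solution when $q<p^{*}$. Comparing these with $u_c$ on $(-\eta,0)$ — using the strong boundary value $u_c(0^-)=c$ at the right endpoint, and the global upper barrier $u^{sc}$ together with the monotone family $\{u_c\}$ to control the left endpoint — sandwiches $c+q^{+}x\le u_c(x)\le c+q^{-}x$ for all $q^{-}<p^{*}<q^{+}$ near $p^{*}$. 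Dividing by $x<0$ and letting $q^{\pm}\to p^{*}$ gives $\frac{u_c(x)-c}{x}\to p^{*}$, i.e. $u_{c,x}(0^-)=p^{*}$, whence $u_c(0^-)+H(u_{c,x}(0^-),0)=c+H(p^{*},0)=0$. Finally, $p^{*}=P(c,0)$ is the inverse of the strictly decreasing map $H(\cdot,0)$ on its decreasing branch evaluated at the level $-c$; as $c$ increases this level decreases, so $p^{*}$ increases. Hence $u_{c,x}(0^-)$ is nondecreasing in $c$ and lies in the decreasing part of $H$, as claimed.

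The main obstacle is precisely this slope identification in the \emph{non--convex} setting: a priori the a.e.\ derivative $u_{c,x}(x)$ could jump among the finitely many branches of $\{H(\cdot,x)=-u_c(x)\}$ as $x\to0^-$, so one must rule out oscillation and single out the decreasing branch. Two structural facts make the bracketing close. The no--flat--parts hypothesis keeps the branches isolated and the affine barriers strict, and the fact that $0$ is a single endpoint with the domain on its left makes the admissible (inward) slopes exactly those on which $H$ is decreasing; the state--constraint obstruction disappears once $c<u^{sc}(0)$, and the comparison against $u^{sc}$ and the monotone family is what tames the far endpoint $-a$ so that the purely local argument near $0$ can be closed.
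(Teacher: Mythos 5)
Your skeleton for existence, uniqueness and the Lipschitz bound (Perron plus comparison plus coercivity), and the observation that $c<u^{sc}(0)$ is what guarantees the Dirichlet datum is attained, are fine and consistent with the paper. The problems are concentrated in the slope identification, which is indeed the core of the statement. First, the comparison you invoke on $(-\eta,0)$ does not close: for the barrier $c+qx$ you need the ordering at the left endpoint, i.e.\ $c-q\eta\le u_c(-\eta)$ (resp.\ $\ge$), and after dividing by $-\eta$ this is precisely the assertion that the difference quotient at $-\eta$ already lies on the correct side of $q$ --- which is what you are trying to prove. Appealing to ``the global upper barrier $u^{sc}$ and the monotone family'' does not supply it. The repair is to run the comparison from a left endpoint chosen along a sequence realizing $\liminf$ (resp.\ $\limsup$) of the difference quotients; the bracketing then shows that no $q$ with $c+H(q,0)\neq 0$ can lie in $(\underline p,\bar p)$, and \eqref{takis2} (no flat parts) forces $\underline p=\bar p$. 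This is essentially the content of Lemma \ref{pl6} and of the one-dimensional regularity theory of \cite{JS}, which is what the paper invokes at this point. Second, under \eqref{takis2} the Hamiltonian $H(\cdot,0)$ has several decreasing branches, so $\{H(\cdot,0)=-c\}$ may have several roots in the decreasing part; your sign dichotomy $c+H(q,0)<0$ iff $q>p^{*}$ holds only on one branch, so affine barriers with slopes near one particular root cannot by themselves exclude the others, and the selection of ``the branch continuously connected to the state-constraint slope'' presupposes a continuity in $c$ that you have not established. (Also, $u^{sc}(0)=-\min_pH(p,0)$ is in general only an inequality $\le$ when $H$ depends on $x$, though only that inequality is actually needed.)

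Because of this, your derivation of the monotonicity of $u_{c,x}(0^-)$ in $c$ from the single-branch inverse $P(c,0)$ is not justified: if the slope could jump between decreasing branches as $c$ varies, monotonicity of the level $-c$ alone gives nothing. The paper proceeds in the opposite logical order, and you should note why that order is more robust: it first obtains the existence of $u_{c,x}(0^-)$ and the identity $u_c(0^-)+H(u_{c,x}(0^-),0)=0$ from \cite{JS} or Lemma \ref{pl6}; it then gets the monotonicity of $u_{c,x}(0^-)$ in $c$ from the fact (a by-product of the comparison proof) that $\max_{\bar I}(u_c-u_{c'})$ is attained at $x=0$, so that $\frac{u_c(x)-u_c(0)}{x}\ge\frac{u_{c'}(x)-u_{c'}(0)}{x}$ for $x<0$ when $c>c'$; and only at the end does it conclude that $u_{c,x}(0^-)$ lies in the decreasing part of $H$, as a consequence of the two monotonicities combined with $H(u_{c,x}(0^-),0)=-u_c(0^-)$ being decreasing in $c$. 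That final step handles the multi-branch situation automatically, which your construction does not.
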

\begin{proof} 
The existence of solutions to \eqref{takis6} is immediate from Perron's method, since, for any $\lambda >0$,  $u^{sc}- \lambda$ is a sub-solution, while the coercivity of the $H$ easily yields a super-solution. The Lipschitzcontinuity of the solution is an immediate consequence of the coercivity of $H$. The existence of  $u_{c,x}(0^-)$ and the fact the equation is satisfied at $0$ follow either along the lines of Jensen and Souganidis \cite{JS}, which studied the detailed differentiability properties of viscosity solutions in one dimension, or a technical lemma  stated without proof after the end of the ongoing one. The claimed monotonicity of $u_c(0^-)$ follows from the comparison principle, while the monotonicity of $u_{c,x}(0^-)$ is a consequence of the fact that, for any $c\neq c'$, the maximum of $u_c-u_{c'}$ is attained at $x=0$. The last assertion  results from the nondecreasing properties  of $u_c(0^-)$ and $u_{c,x}(0^-)$ and  the fact that $u_c(0^-) + H(u_{c,x}(0^-),0)= 0$.

\hskip6.5in  $\blacksquare$
\end{proof} 

\smallskip

The technical lemma that can be used in the above proof in place  of \cite{JS} is stated next without a proof. 

\begin{lemma}\label{pl6}   Assume that $u \in C^{0,1}(\bar I)$ solves  $u+H(u_x,x)\leq 0$ (resp. $u+H(u_x,x)\geq 0$)  in $I$ and let $\bar p:= \underset{x\to 0^-} \limsup \frac{u(x)-u(0)}{x}$ and $\underline p:=\underset{x\to 0^-} \liminf \frac{u(x)-u(0)}{x}.$ Then $u(0)+ H(\bar p, 0) \leq 0$ (resp.  $u(0)+ H(\underline p, 0) \geq 0$.) 
\end{lemma}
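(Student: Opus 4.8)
The plan is to prove both assertions by the same scheme and reduce each to the following: produce a sequence of \emph{interior} points $x_n \to 0^-$ together with admissible viscosity test slopes $q_n \to \bar p$ (resp. $q_n \to \underline p$), and then pass to the limit in the interior inequality $u(x_n) + H(q_n, x_n) \le 0$ (resp. $\ge 0$), using the continuity of $u$ and $H$. I will write out the subsolution case; the supersolution case is obtained verbatim by reversing the inequalities, replacing maxima by minima and $\limsup$ by $\liminf$. Throughout I use that on $I = (-a,0)$ one has $x < 0$, so dividing a secant inequality by $x$ reverses its direction; set $s(x) := \frac{u(x)-u(0)}{x}$, so that $\bar p = \limsup_{x \to 0^-} s(x)$.

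First I choose $x_n \uparrow 0$ with $q_n := s(x_n) \to \bar p$ and consider the secant line $S_n(x) := u(0) + q_n x$, which coincides with $u$ at both endpoints of $[x_n, 0]$, i.e. $(u - S_n)(x_n) = (u - S_n)(0) = 0$. The core mechanism is a dichotomy on the sign of $\max_{[x_n,0]}(u - S_n)$. If this maximum is strictly positive for infinitely many $n$, then, being zero at both endpoints, it is attained at an interior point $z_n \in (x_n, 0)$, where $u - S_n$ has a genuine interior local maximum; the subsolution property gives $u(z_n) + H(q_n, z_n) \le 0$, and since $z_n \to 0$ and $q_n \to \bar p$, letting $n \to \infty$ yields $u(0) + H(\bar p, 0) \le 0$.

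The remaining and genuinely delicate case is when $u \le S_n$ on $[x_n, 0]$ for all large $n$; this is where the main obstacle lies, because then the maxima of $u - S_n$ sit only at the endpoints, one of which is the boundary point $0$ at which no interior inequality is available. Passing to the monotone subsequence one checks that this forces the one-sided derivative $u_x(0^-)$ to exist and equal $\bar p$, with $u$ lying below its left tangent line. To extract the inequality here I would regularize by sup-convolution $u^\lambda(x) := \sup_{y}\big(u(y) - \frac{1}{2\lambda}|x-y|^2\big)$: on an interval staying a distance $O(\sqrt\lambda)$ away from the endpoint the maximizer is interior, $u^\lambda$ is semiconvex, and it solves $u^\lambda + H(u^\lambda_x, x) \le \rho(\lambda)$ with $\rho(\lambda) \to 0$ (the error is controlled by the Lipschitz bound and the uniform continuity of $H$, needing no convexity). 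For a semiconvex function the left derivative exists everywhere and is the limit of the pointwise derivatives from the left, while at points of differentiability the superdifferential formulation of the subsolution property gives the pointwise relation; taking $x \to 0^-$ and then $\lambda \to 0$ recovers $u(0) + H(\bar p, 0) \le 0$. The same conclusion can instead be reached through the fine one-dimensional structure of viscosity solutions of Jensen and Souganidis \cite{JS}, where the point is that at the (at most countably many) non-differentiability points the superdifferential $D^+ u$ is a nondegenerate interval, all of whose elements satisfy the inequality, and this is precisely what allows $\bar p$ to be attained as a limit of admissible slopes.

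I expect the degenerate case above to be the only real difficulty: one must produce interior test slopes converging to $\bar p$ itself, not merely to some larger reachable slope, since the non-monotonicity of $H$ means an inequality at a nearby slope carries no information about $H(\bar p, 0)$. The secant dichotomy disposes of everything else essentially for free, and, as noted, the supersolution statement follows by the mirror-image argument with $\underline p$ and minima.
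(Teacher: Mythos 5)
The paper states this lemma \emph{without} proof (it is offered as a substitute for the appeal to \cite{JS}), so there is no in-paper argument to compare against; judged on its own terms, your secant dichotomy is the right skeleton and your first branch is complete: when $\max_{[x_n,0]}(u-S_n)>0$ the maximum is interior, giving $q_n\in D^+u(z_n)$, hence $u(z_n)+H(q_n,z_n)\le 0$, and the passage to the limit is immediate. You also correctly isolate the degenerate branch as the crux and correctly deduce there that $u_x(0^-)$ exists and equals $\bar p$. (A small refinement: the useful dichotomy is whether the maximum of $u-S_n$ is attained at an \emph{interior} point, not whether it is strictly positive; a zero maximum attained inside $(x_n,0)$ already works.)

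The gap is in how you close the degenerate branch. First, the sup-convolution $u^\lambda$ satisfies $u^\lambda+H(u^\lambda_x,x)\le\rho(\lambda)$ only at distance $O(\lambda)$ from the endpoint $0$ (the maximizer may otherwise sit at $0$, where $u$ carries no information), so for fixed $\lambda$ you cannot ``take $x\to 0^-$'' inside the admissible region; the two limits are not available in the order you indicate. Second, and more fundamentally, semiconvexity at scale $\lambda$ does not force the derivatives of $u^\lambda$ (equivalently, elements of $D^+u$ at nearby points) to converge to $\bar p$: only $u^\lambda_x+x/\lambda$ is monotone, which permits oscillations of $u^\lambda_x$ of total variation of order $1/\lambda$. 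Convergence of the difference quotients to $\bar p$ does not imply convergence of nearby derivatives to $\bar p$ --- think of a sawtooth whose slopes alternate between $a<\bar p<b$ with proportions tuned so that the secant slopes converge to $\bar p$. What saves the statement there, and in general, is a mechanism you do not supply: let $\alpha\le\beta$ be the essential lower and upper limits of $u_x$ as $x\to 0^-$; since the difference quotients are averages of $u_x$, one has $\bar p\in[\alpha,\beta]$. If $\alpha=\beta$, use a.e.\ points of differentiability, where the inequality holds pointwise. If $\alpha<\beta$, then for every $\gamma\in(\alpha,\beta)$ the function $x\mapsto u(x)-\gamma x$ both strictly increases and strictly decreases on every interval $(-h,0)$, hence has interior local maxima accumulating at $0$; at such points $\gamma\in D^+u$, so $u(0)+H(\gamma,0)\le 0$, and letting $\gamma\to\bar p$ concludes. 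This argument in fact subsumes the secant dichotomy altogether. Your fallback appeal to \cite{JS} begs the same question: for a mere subsolution the nondifferentiability points need not be countably many concave kinks ($D^+u$ is empty at convex kinks), and the assertion that $\bar p$ ``is attained as a limit of admissible slopes'' is exactly the point to be proved.
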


%\begin{lemma}\label{pl6.1}  Assume that $u \in C(\bar I)$ solves $u+H(u_x,x)\leq 0$ in $I$. Then either $u$ is the state-constraint solution in $\bar I$ or $\underset{x\to 0^-} \limsup \frac{u(x)-u(0)}{x} \leq \bar P$,  where $\bar P:=\inf\{z\in \R: H(z,0) \leq H(p,0) \ \text{for all} \ z\leq p \}.$  
%\end{lemma}

We state next without a proof a well known fact which characterizes the possible limits of the uniform in $\ep$ Lipschitzcontinuous solutions $u^\ep$  to  \eqref{takis5}.

%number of results needed for the proof of Theorem \ref{pl3}. The following lemma, which is stated without a proof, is about the uniform in $\ep$ Lipschitzcontinuity 
%of the solutions $u^\ep$ to \eqref{takis5} and the well known limiting behavior of the $u^\ep$'s. The former  is a consequence of the assumed coercivity while the latter is a well known fact in the theory of viscosity solutions. 
%\end{document}
\begin{lemma}\label{pl7} Assume \eqref{takis1}. %The  $u^\ep$'s are uniformly in $\ep$ Lipschitzcontinuous. 
Any subsequential  
% to  \eqref{takis5} There exists $C>0$ such that $\underset {\ep \in (0,1], 1\leq \ldots\leq d} \sup[\|u_1^\ep\| + \|u^\ep_{i,x_i}\| \leq C$. 
 limit $u$ of the $u^\ep$ is a viscosity sub-solution to 
\begin{equation}
\begin{cases}\label{takis11}
u + H_i(u_{x_i}, x_i)\leq 0 \ \text{in} \ I_i \ \text{ for each $i$},\\[1mm]   %=1,\ldots,d$}\\[1mm]
\min [\sum_{i=1}^K u_{x_i},  u(0) + \underset{1\leq i \leq K}\min H_i(u_{x_i}, 0)] \leq 0 \ \text{at $x=0$,}  
\end{cases}
\end{equation}
and a viscosity super-solution to
\begin{equation}
\begin{cases}\label{takis12}
u + H_i(u_{x_i}, x_i)\geq 0 \ \text{in} \ I_i \ \text{ for each $i$},\\[1mm]
\max [\sum_{i=1}^K u_{x_i},  u(0) + \underset{1\leq i \leq K}\max H_i(u_{x_i}, 0)] \geq 0 \ \text{at $x=0$.}  
\end{cases}
\end{equation}
\end{lemma}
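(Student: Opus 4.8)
The plan is to prove this by the standard vanishing-viscosity stability (half-relaxed limits) method, treating the open branches and the junction separately. Since, as noted after \eqref{takis5}, the $u^\ep$ are bounded in $C^{0,1}(\bar I)$ uniformly in $\ep$, the Arzel\`a--Ascoli theorem yields a subsequence converging uniformly on $\bar I$ to some $u \in C^{0,1}(\bar I)$; uniform convergence is all that is needed to pass test-function extrema to the limit, so I work directly with $u$.

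The interior inequalities in \eqref{takis11} and \eqref{takis12} are routine: applying the classical stability of viscosity solutions under the vanishing-viscosity limit in each open branch $I_i$, the term $-\ep u^\ep_{x_i x_i}$ drops out and one obtains that $u$ solves $u + H_i(u_{x_i}, x_i)=0$ in $I_i$. This uses only the continuity and coercivity of the $H_i$ and no information from the junction.

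The content of the lemma is the junction condition, where the two alternatives in the $\min$ (resp. $\max$) correspond exactly to the two possible limiting locations of the perturbed extremum. For the sub-solution condition, take $\phi \in C^2$ with $u-\phi$ having a strict local maximum at $0$; by uniform convergence $u^\ep-\phi$ has a local maximum at some $x^\ep \to 0$, and since there are finitely many branches I may pass to a subsequence along which either $x^\ep=0$ for all $\ep$ or $x^\ep$ stays in the interior of a single branch $I_j$. In the first case, restricting $u^\ep - \phi$ to each branch gives a maximum at the endpoint $0$, whence $u^\ep_{x_i}(0) \ge \phi_{x_i}(0)$ for every $i$ (derivatives at the junction being understood as one-sided, taken from within the branch); summing and using the Kirchoff identity $\sum_{i=1}^K u^\ep_{x_i}(0)=0$ gives $\sum_{i=1}^K \phi_{x_i}(0) \le 0$, so the $\min$ in \eqref{takis11} is $\le 0$. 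In the second case, at the interior maximum $u^\ep_{x_j}(x^\ep)=\phi_{x_j}(x^\ep)$ and $u^\ep_{x_j x_j}(x^\ep) \le \phi_{x_j x_j}(x^\ep)$; inserting these into the equation in \eqref{takis5} gives $u^\ep(x^\ep) + H_j(\phi_{x_j}(x^\ep),x^\ep) \le \ep\,\phi_{x_j x_j}(x^\ep)$, and letting $\ep \to 0$ yields $u(0) + H_j(\phi_{x_j}(0),0) \le 0$, hence $u(0) + \min_i H_i(\phi_{x_i}(0),0) \le 0$ and again the $\min$ in \eqref{takis11} is $\le 0$. The super-solution condition \eqref{takis12} follows by the mirror-image argument: a local minimum of $u^\ep-\phi$ at the junction gives $u^\ep_{x_i}(0) \le \phi_{x_i}(0)$ and therefore $\sum_{i=1}^K \phi_{x_i}(0)\ge 0$, while an interior minimum gives $u(0)+H_j(\phi_{x_j}(0),0)\ge 0$; in either case the $\max$ in \eqref{takis12} is $\ge 0$.

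The main obstacle is organizing this junction dichotomy correctly, in particular recognizing that the Kirchoff condition survives in the limit only when the perturbed extremum sits exactly at the junction, whereas the Hamiltonian inequality survives only when it retreats into a branch, so that each alternative in the $\min$/$\max$ is genuinely unavoidable. A minor technical remark is that to check the first-order junction inequalities it suffices to test against $\phi \in C^2$, for which $\phi_{x_j x_j}(x^\ep)$ is defined; the $C^{0,1}(\bar I)$ generality allowed for test functions in Definition \ref{takis0} is not needed here, and the $\ep$-multiplied second-order term vanishes in the limit in any case.
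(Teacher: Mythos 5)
The paper states Lemma \ref{pl7} without proof, describing it as a well-known fact, and your argument is exactly the standard vanishing-viscosity proof being alluded to: the uniform $C^{0,1}$ bound gives compactness, interior stability is classical, and at the junction the dichotomy between the perturbed extremum sitting at $0$ (where the one-sided endpoint inequalities $u^\ep_{x_i}(0)\ge \phi_{x_i}(0)$, resp.\ $\le$, combine with the Kirchoff identity to produce the first alternative) and retreating into a single branch (where the equation in \eqref{takis5} produces the Hamiltonian alternative) accounts precisely for the $\min$/$\max$ structure. The argument is correct, including the closing remark that $C^2$ test functions suffice, since the junction inequalities depend only on the one-sided derivatives $\phi_{x_i}(0^-)$.
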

Recall that the inequalities at $x=0$ must be interpreted in the viscosity sense. For example, if, for some $\phi \in C^{0,1}(\bar I)$, $u-\phi$ has a maximum at $0$, then
$\min [\sum_{i=1}^d \phi_{x_i}(0^-),  u(0) + \underset{1\leq \ i \leq K}\min H_i(\phi_{x_i}(0), 0)] \leq 0.$
 \smallskip
 
Proposition \ref{pl5} below refines the behavior of any $u$ satisfying \eqref{takis11} and \eqref{takis12}.  The proof of Theorem \ref{pl3} is then immediate. 

%For the same reasons as before here we also assume \eqref{takis2}. Recall that in view of  Proposition \ref{pl5} any solution of the boundary value problem \eqref{takis6}, which is strictly below the state constarint solutiona at $0$,  is differentiable at $0$ and satisfies the equation (at $0$)  in the pointwise sense. The next result states that the derivatives of any solution to \eqref{takis11} and \eqref{takis12} actually satisfy the Neumann condition  in\eqref{takis11} and \eqref{takis12}. The proof of Theorem \ref{pl3} is then immediate. 
\begin{proposition}\label{pl8}
Assume \eqref{takis1} and \eqref{takis2}. 

(i) If $u$ is  continuous solution  to  \eqref{takis11} and \eqref{takis12} and  $u(0)<\hat u(0)$, then  $\sum_{i=1}^d u_{x_i}(0^-) =0.$ %the $u_{x_i}(0^-)$'s  satisfy 
%\begin{equation}\label{takis13}
%\sum_{i=1}^d u_{x_i}(0^-) =0. %\ \text{at $x=0$.}  
%\end{equation}

(ii) The problem \eqref{takis11} and \eqref{takis12} has at most one  solution on $u\in C^{0,1}(\bar I)$ such that  $u(0)<\hat u(0)$.
\end{proposition}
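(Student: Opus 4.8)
For part (i) the idea is to reduce everything to the junction value $c:=u(0)$ and the one-sided slopes $p_i:=u_{x_i}(0^-)$. First I would note that, by the interior inequalities in \eqref{takis11} and \eqref{takis12}, the restriction $u|_{I_i}$ is at once a sub- and a super-solution of $w+H_i(w_{x_i},x_i)=0$ in $I_i$, hence a solution with $u(0)=c$; coercivity makes it Lipschitz, and since $c<\hat u(0)\le u^{sc,i}(0)$, Proposition \ref{pl5} applies and identifies $u|_{I_i}$ with the unique solution $u_c$ of \eqref{takis6} for $H_i$. In particular $p_i$ exists, $c+H_i(p_i,0)=0$, and $p_i$ lies in the (open) decreasing part of $H_i(\cdot,0)$; the strict inequality $c<u^{sc,i}(0)$ is exactly what keeps $p_i$ away from a minimizer (equivalently, $H_i(p_i,0)=-c$ exceeds the minimal value), so $H_i(\cdot,0)$ is strictly decreasing near $p_i$ and $H_i(q,0)>H_i(p_i,0)>H_i(r,0)$ whenever $q<p_i<r$ are close enough to $p_i$.

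The core of (i) is then two one-sided test-function arguments at $x=0$. To get $\sum_i p_i\le 0$, I would pick slopes $q_i<p_i$ close to $p_i$ and the test function $\phi_i(x_i)=c+q_ix_i-x_i^2$ on each ray; since $q_i<u_{x_i}(0^-)$, the function $u-\phi$ has a strict local maximum at $0$, so the sub-solution condition in \eqref{takis11} gives $\min[\sum_i q_i,\ c+\min_i H_i(q_i,0)]\le 0$. Because each $c+H_i(q_i,0)>0$ by the first paragraph, the Hamiltonian term is positive and the inequality forces $\sum_i q_i\le 0$; letting $q_i\uparrow p_i$ yields $\sum_i p_i\le 0$. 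The reverse bound is symmetric: slopes $r_i>p_i$ close to $p_i$ and a test function touching from below make $u-\phi$ have a strict local minimum at $0$, so the super-solution condition in \eqref{takis12} gives $\max[\sum_i r_i,\ c+\max_i H_i(r_i,0)]\ge 0$; now each $c+H_i(r_i,0)<0$, which forces $\sum_i r_i\ge 0$, and $r_i\downarrow p_i$ gives $\sum_i p_i\ge 0$. Hence $\sum_i u_{x_i}(0^-)=0$. I expect this step — matching the admissible test-function slopes to the one-sided derivatives and exploiting the strict monotonicity of $H_i$ on either side of $p_i$ — to be the only genuine obstacle; the rest is bookkeeping.

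For part (ii) the plan is that, in the regime $u(0)<\hat u(0)$, a solution is completely determined by its junction value. Indeed, as in the first paragraph each $u|_{I_i}$ is the unique solution $u_c$ of \eqref{takis6} with $c=u(0)$ furnished by Proposition \ref{pl5}, so any two solutions sharing the junction value coincide on all of $\bar I$. It therefore remains to show that the junction value itself is unique. Writing $p_i(c):=u_{c,x}(0^-)$ for the boundary slope of \eqref{takis6} associated with $H_i$, the identity $c+H_i(p_i(c),0)=0$ shows that $p_i(c)$ determines $c$, so the nondecreasing map $c\mapsto p_i(c)$ of Proposition \ref{pl5} is in fact strictly increasing; consequently $c\mapsto\sum_i p_i(c)$ is strictly increasing and the equation $\sum_i p_i(c)=0$ has at most one root. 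Since, by part (i), every solution with $u(0)<\hat u(0)$ satisfies this equation at $c=u(0)$, any two such solutions have the same junction value and hence are equal.
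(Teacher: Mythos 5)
Your part (i) is correct and is essentially the paper's own argument: identify the one-sided slopes $p_i=u_{x_i}(0^-)$ via Proposition \ref{pl5}, use that $u(0)+H_i(p_i,0)=0$ with $p_i$ on the strictly decreasing branch of $H_i(\cdot,0)$, and test at $0$ from above with slopes slightly below $p_i$ and from below with slopes slightly above $p_i$ so that the Hamiltonian term in the $\min$ (resp.\ $\max$) has a definite sign and the Kirchhoff term must carry the inequality; your $q_i,r_i$ play exactly the role of the paper's $u_{x_i}(0^-)\mp\lambda$. For part (ii), however, you take a genuinely different route. The paper runs a comparison argument: for two solutions $u,v$ it perturbs by $-\delta\sum_i x_i$, uses the Kirchhoff identity from (i) to show the maximum of $u-v-\delta\sum_i x_i$ cannot sit at the junction, and then invokes standard interior comparison. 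You instead classify: each solution restricted to $\bar I_i$ is the unique Dirichlet solution $u_c$ of \eqref{takis6} with $c=u(0)$, so a solution is determined by its junction value, and the map $c\mapsto\sum_i p_i(c)$ is strictly increasing (nondecreasing by Proposition \ref{pl5}, injective because $c=-H_i(p_i(c),0)$), so the equation $\sum_i p_i(c)=0$ forced by (i) has at most one root. Both arguments hinge on the same Kirchhoff condition; the paper's is the more routine viscosity comparison, while yours buys something extra, namely an explicit characterization of the unique admissible junction value as the root of $\sum_i p_i(c)=0$ with $c<\hat u(0)$, which is in the spirit of Theorem \ref{pl3}. The only point to make explicit in your version is that for $c<u^{sc,i}(0)$ the Dirichlet datum in \eqref{takis6} is actually attained, so that $u_c(0^-)=c$ and the identity $c+H_i(p_i(c),0)=0$ (rather than $u_c(0^-)+H_i(p_i(c),0)=0$) is legitimate; this is implicit in the paper's own use of $u(0)+H_i(u_{x_i}(0^-),0)=0$, so it is a matter of bookkeeping, not a gap.
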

\begin{proof}(i)~Proposition \ref{pl5} yields that, for each $i$,  the $u_{x_i}(0^-)$'s exist  and belong to the decreasing part of the $H_i $and $u(0)+ H_i(u_{x_i}(0^-),0)=0.$ %for all $i$.  Moreover, since, for each $i$,  $u_{x_i}(0^-)$ belongs to the decreasing part of the $H_i$, 
It follows that there exists some small $\lambda >0$ such that $u(0)+ H_i(u_{x_i}(0^-) + \lambda,0)<0$ and  $u(0)+ H_i(u_{x_i}(0^-) -\lambda,0)>0.$
\smallskip

Choose $\phi^{\pm} \in C^{0,1}(\bar I)$ be such that $\phi^{\pm}_{x_i}(0^-)=u_{x_i}(0^-) \pm \lambda$. It follows that $0$ is a local max and min of $u-\phi^{-}$ and  $u-\phi^{+}$ respectively. Then \eqref{takis11} and \eqref{takis12} and the choice of $\phi^{\pm}$ yield the inequalities\\

$\begin{cases}
\min \Big[\sum_{i=1}^K \phi^-_{x_i}(0^-),  u(0) + \underset{1\leq i \leq K}\min H_i(\phi^-_{x_i}(0^-), 0)\Big]=\\[1mm]
\qquad \qquad  \min \Big[\sum_{i=1}^K u_{x_i}(0^-) -\lambda K,  u(0) + \underset{1\leq i \leq K}\min H_i(u_{x_i}(0^-) -\lambda, 0)\Big]\leq 0, 
\end{cases}$
\smallskip

and

\smallskip

$\begin{cases}
\max \Big[\sum_{i=1}^K \phi^+_{x_i}(0^-),  u(0) + \underset{1\leq i  \leq K}\max H_i(\phi^+_{x_i}(0^-), 0)\Big]=\\[1mm]
\qquad \qquad \max \Big [\sum_{i=1}^K u_{x_i}(0^-) +\lambda K,  u(0) + \underset{1\leq i \leq K}\max H_i(u_{x_i}(0^-)+\lambda , 0)\Big] \geq 0. \end{cases}$
\smallskip

It follows from the choice of $\lambda$ that  $\sum_{i=1}^K u_{x_i}(0^-) -\lambda \leq 0 \leq \sum_{i=1}^K u_{x_i}(0^-) +\lambda K$, and, hence, letting $\lambda \to 0 $ yields the claim. %\eqref{takis13}.

\smallskip
(ii) If $u,v$ are two continuous solutions to \eqref{takis11} and \eqref{takis12},   the Kirchoff condition established above  implies that, for some small $\delta>$, $u(x)-v(x) -\delta \sum_{i=1}^d x_i$ cannot have a maximum at $0$. The claim then follows from standard viscosity solutions arguments.

\hskip6.5in  $\blacksquare$
\end{proof}
\smallskip

Theorem \ref{pl4} is now immediate from the first claim in Proposition 3.4. %\ref{pl18}.

%%%%%%%
%%%%%%%%%
\section{Some observations }
%\label{sec:observations}
%\subsection{Fattening and the super-solution condition
We present  another way to approximate the constrained solution of the junction based on ``fattening'' $\bar I$.  To simplify the notation we assume that $K=2$. 
\smallskip

For $\ep>0$, let $I_\ep$ be an open neighborhood of  ${\bar I}$ in $\R^2$ of size $\ep$, that is  ${\bar I} \subset I_\ep$ and $\text{diam} I_\ep \leq \ep$,  consider the coercive Hamiltonian $H:\R^{2} \times \R^2 \to \R$ and the state-constraint problem
\begin{equation}\label{takis14}
\begin{cases}
u^\ep + H(Du^\ep, x)\leq 0 \ \text{in}  \ I_\ep,\\[1mm]
u^\ep + H(Du^\ep, x)\geq 0  \ \text{on}  \ \bar {I_\ep},
 \end{cases} 
\end{equation}
where $Dv:=(v_{x_1}, v_{x_2})$ and $x:=(x_1,x_2).$ The coercivity of $H$ yields Lipschitz bounds so that, along subsequences,  $u^\ep \to u.$  
\smallskip

Define $H_1(p_1,x_1):=\underset{p_2\in \R} \min H(p_1,p_2, x_1,0)$ and $H_2(p_2,x_2):=\underset{p_1\in \R} \min H(p_1,p_2, 0,  x_2),$
\begin{theorem}\label{pl9} Any limit $u$ of the solutions $u^\ep$ to \eqref{takis14} is a  solution to 
$u+H_1(u_{x_1},x_1)=0 \ \text{in} \  I_1 \ \text{and} \  u+H_2(u_{x_2},x_2)=0 \ \text{in} \  I_2$, and if, for some $\phi \in C^1(\R^2)$, $u-\phi$ has local minimum at $0$,
then 
 $u+H( \phi_{x_1}(0),  \phi_{x_2}(0), 0) \geq 0.$
\end{theorem}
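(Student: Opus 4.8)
The plan is to prove the two assertions of Theorem \ref{pl9} separately: first the interior equations on $I_1$ and $I_2$ with the ``effective'' Hamiltonians $H_1, H_2$, and then the super-solution inequality at the junction involving the full Hamiltonian $H$. Throughout I would use the standard stability of viscosity sub- and super-solutions under uniform convergence, together with the fact, guaranteed by the coercivity of $H$, that the family $u^\ep$ is uniformly Lipschitz and hence (along subsequences) converges uniformly to some Lipschitz $u$ on $\bar I$.

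First I would establish the interior equations. Fix a point $\bar x_1 \in I_1$, so the corresponding point in $\bar I_\ep \subset \R^2$ lies on the segment $I_1 \times \{0\}$ but in the interior of the fattened domain. Suppose $u - \psi$ has a strict local maximum at $\bar x_1$ for some $\psi \in C^1$; I want to conclude $u(\bar x_1) + H_1(\psi_{x_1}(\bar x_1), \bar x_1) \le 0$, i.e. $u(\bar x_1) + \min_{p_2} H(\psi_{x_1}(\bar x_1), p_2, \bar x_1, 0) \le 0$. The key geometric point is that near $\bar x_1$ the neighborhood $I_\ep$ extends in the transverse $x_2$-direction, so I can build a test function for the two-dimensional problem of the form $\psi(x_1) + q\, x_2$ with the slope $q$ chosen to be (close to) the minimizer $p_2^*$ realizing $\min_{p_2} H(\psi_{x_1}(\bar x_1), p_2, \bar x_1, 0)$. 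Using this test function against the sub-solution inequality in \eqref{takis14} and passing $\ep \to 0$ (so that the transverse variable collapses to $0$) should yield exactly the $H_1$ inequality; the reverse super-solution inequality follows symmetrically, where now I use the full super-solution statement holding up to the boundary $\bar I_\ep$ and minimize over the transverse slope appearing in the test. The same argument applied on $I_2$ gives the $H_2$ equation.

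Second I would treat the junction super-solution inequality. Suppose $u - \phi$ has a local minimum at $0$ for some $\phi \in C^1(\R^2)$. I would perturb $\phi$ to a strict minimizer and locate, for each $\ep$, a nearby minimum point $x^\ep \in \bar I_\ep$ of $u^\ep - \phi$; by stability $x^\ep \to 0$. At $x^\ep$ the super-solution inequality in \eqref{takis14} (which, crucially, holds on all of $\bar I_\ep$ including boundary points) gives $u^\ep(x^\ep) + H(D\phi(x^\ep), x^\ep) \ge 0$. Since $\phi \in C^1$ and $x^\ep \to 0$, letting $\ep \to 0$ yields $u(0) + H(\phi_{x_1}(0), \phi_{x_2}(0), 0) \ge 0$, which is the claim. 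The point here is that because the full state-constraint super-solution inequality holds up to the closure, no minimization over transverse slopes is lost at the junction: the gradient $D\phi(0) = (\phi_{x_1}(0), \phi_{x_2}(0))$ survives intact, producing an inequality with the full two-dimensional $H$ rather than either $H_1$ or $H_2$.

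The main obstacle I anticipate is the first step, namely correctly organizing the transverse test-function construction so that the collapse of the fattened domain $I_\ep$ onto the one-dimensional $\bar I$ produces precisely the infimal-convolution Hamiltonians $H_1, H_2$, and verifying that the minimizing transverse slope $p_2^*$ (respectively $p_1^*$) can legitimately be realized by an admissible $C^1$ test function whose maximum/minimum point stays inside $I_\ep$ as $\ep \to 0$. In particular one must ensure the geometry of $I_\ep$ near the junction is such that interior points of $I_1$ and $I_2$ genuinely have transverse room of order $\ep$, so that the variational characterization $H_i = \min_{p_j} H$ emerges correctly in the limit; handling points that approach the junction (where $I_\ep$ transitions between the two transverse directions) requires the care that the theorem localizes away by testing at fixed interior points $\bar x_i$.
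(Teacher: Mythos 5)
Your treatment of the super-solution inequalities --- both in the interior (test with $\phi(x_1)+p_2x_2$ for every $p_2$, use that the state-constraint super-solution inequality of \eqref{takis14} holds on all of $\bar I_\ep$ so boundary minimum points cause no trouble, then infimize over $p_2$ to produce $H_1=\min_{p_2}H$) and at the junction --- is exactly the paper's argument and is fine. The gap is in the sub-solution half of the interior equations, and it is the one you yourself flag at the end without resolving: you propose to touch $u^\ep$ from above by $\psi(x_1)+p_2^*x_2$ and invoke the sub-solution inequality of \eqref{takis14} at the maximum point. But that inequality holds only in the \emph{open} set $I_\ep$, and since the tube is only $O(\ep)$ thick transversally there is no way to localize the maximum of $u^\ep-\psi(x_1)-p_2^*x_2$ away from $\partial I_\ep$: if, say, $u^\ep$ is increasing in $x_2$ on a slice, the maximum sits on the lateral boundary, where only the super-solution inequality is available. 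A linear transverse term with a fixed slope cannot repair this; to force an interior maximum one must add something like $x_2^2/\eta$ with $\eta$ small depending on $\ep$ and on $\mathrm{dist}(\bar I,\partial I_\ep)$, and then the transverse slope of the test function at the maximum point is no longer under your control, so you cannot land on $H(\cdot,p_2^*,\cdot)$.

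The way out --- and the paper's actual argument --- is to notice that the sub-solution direction requires no minimization over $p_2$ at all: since $H_1(p_1,x_1)=\min_{p_2}H(p_1,p_2,x_1,0)\le H(p_1,q,x_1,0)$ for \emph{every} $q$, any two-dimensional test function $\Phi$ touching $u^\ep$ from above at an interior point, whatever its transverse slope, yields $u^\ep+H_1(\Phi_{x_1},x_1)\le u^\ep+H(D\Phi,x)+o_\ep(1)\le o_\ep(1)$; this is the content of the paper's one-line remark $u^\ep+H_1(u^\ep_{x_1},x_1)\le u^\ep+H(u^\ep_{x_1},u^\ep_{x_2},x_1,0)$. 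Thus $u^\ep$ is already a sub-solution of the $H_1$-equation in $I_\ep$, and a transverse correction such as the quadratic one above is then harmless precisely because the resulting inequality no longer involves the transverse slope. In short, your plan has the two directions backwards: the minimizing slope $p_2^*$ is irrelevant for the sub-solution, the whole content of the inf-projection is carried by the super-solution side (where, as you correctly note, the inequality up to the boundary saves you), and the two halves are genuinely asymmetric --- calling them ``symmetric'' hides exactly the point where your argument breaks.
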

\begin{proof} The proof of the second claim is immediate. Here we concentrate on the first part and, since the arguments are similar, we take  $i=1$.
% and show the super-solution property.
\smallskip

For some $\phi \in C^1(I_1)$, let  ${\bar x}_1\in I_1$ be a local minimum  of $u(x_1, 0) -\phi(x_1)$. It is immediate  that, for all $p_2 \in \R$,  $u^\ep(x_1,x_2)- \phi (x_1) -p_2x_2$ has a minimum at $({\bar x}_1^\ep,  {\bar x}_2^\ep)$ and, as $\ep \to 0$, ${\bar x}_1^\ep \to x_1$ and ${\bar x}_2^\ep \to 0$. It follows from \eqref{takis14} that 
$ u({\bar x}_1,0) + H(\phi({\bar x}_1, p_2, {\bar x}_1, 0) \geq 0$, and, since $p_2$ is arbitrary,  $u({\bar x}_1,0) + H_1(\phi({\bar x}_1, {\bar x}_1 ) \geq 0.$
\smallskip

The sub-solution property follows from the fact that 
$u^\ep + H_1(u^\ep_{x_1}, x_1) \leq u^\ep + H(u^\ep_{x_1}, u^\ep_{x_1}, x_1, 0).$
%If for some $\phi \in C^1(\R)$, ${\bar x}_1\in I_1$ and ${\bar x}_1^\ep \in I_1$ are respectively  a local maxima  of $u(x_1, 0) -\phi(x_1)$   and $u(x_1, 0) -\phi(x_1)$
%$
\end{proof}
\medskip

An immediate consequence of Theorem \ref{pl9} is the following proposition.
\begin{proposition}\label{pl10} If $H(p_1,p_2, x_1,x_2)=\max (H_1(p_1,x_1), H_2(p_2,x_2))$, then the  $\underset{\ep \to 0}\lim u^\ep$ exists and is the state-constraint solution to the junction problem.
\end{proposition}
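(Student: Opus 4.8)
The plan is to apply Theorem \ref{pl9} to the specific Hamiltonian $H(p_1,p_2,x_1,x_2)=\max(H_1(p_1,x_1),H_2(p_2,x_2))$ and verify that the resulting ``relaxed'' Hamiltonians coincide with the original $H_1,H_2$, so that the limit $u$ satisfies exactly the equations in each branch, and that the junction inequality from Theorem \ref{pl9} upgrades to the state-constraint super-solution condition \eqref{takis3.3}. First I would compute the effective Hamiltonians: since $H(p_1,p_2,x_1,0)=\max(H_1(p_1,x_1),H_2(p_2,0))$ and $H_2$ is coercive, minimizing over $p_2$ drives $H_2(p_2,0)$ down to its minimum value $m_2:=\min_{p_2}H_2(p_2,0)$, whence $\min_{p_2}H(p_1,p_2,x_1,0)=\max(H_1(p_1,x_1),m_2)$. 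Thus the Hamiltonian produced by Theorem \ref{pl9} in branch $1$ is $\max(H_1(\cdot,x_1),m_2)$ rather than $H_1$ itself; the same holds symmetrically in branch $2$. This is the first point requiring care: the effective equation is governed by a flattened Hamiltonian, not $H_1$ directly.

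Next I would reconcile this with the desired conclusion that $u$ is the state-constraint solution to the \emph{original} junction problem. The key observation is that flattening $H_1$ at the level $m_2$ below its own minimum only changes the equation on the region where $H_1(p,x_1)<m_2$; on the decreasing branch relevant near the junction the solution's slope lies where $H_1$ dominates, so the state-constraint solution $u^{sc,1}$ of $w+H_1(w_{x_1},x_1)=0$ coincides with that of $w+\max(H_1,m_2)=0$ precisely when the relevant value is consistent. More cleanly, I would invoke Theorem \ref{pl1}(iii): the constraint solution $\hat u$ satisfies $\hat u(0)=\min_i u^{sc,i}(0)$, and by Proposition \ref{pl5} the slope at the junction sits on the decreasing part of each $H_i$. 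I would then check directly that the limit $u$ from \eqref{takis14}, being a subsolution in each branch for $\max(H_i,m_{3-i})$ and satisfying the super-solution junction inequality $u(0)+H(\phi_{x_1}(0),\phi_{x_2}(0),0)\geq0$, forces $u(0)+\max_i H_i(\phi_{x_i}(0),0)\geq0$, which is exactly \eqref{takis3.3}. The max structure of $H$ is what converts the two-dimensional gradient test into the junction condition.

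Concretely, for the super-solution part I would take $\phi\in C^1(\R^2)$ with $u-\phi$ minimized at $0$; Theorem \ref{pl9} gives $u(0)+\max(H_1(\phi_{x_1}(0),0),H_2(\phi_{x_2}(0),0))\geq0$, which is literally $u(0)+\max_{i}H_i(\phi_{x_i}(0),0)\geq0$, i.e.\ \eqref{takis3.3}. For the branch equations I would argue that the sub-solution inequality $u+\max(H_1,m_2)\leq0$ combined with the fact that any subsolution value is bounded above by $u^{sc,i}$ (hence its slope avoids the flat region where $H_1<m_2$) yields $u+H_1(u_{x_1},x_1)\leq0$; the super-solution inequality in each branch follows from Theorem \ref{pl9} directly. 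Together these show $u$ is simultaneously a sub- and super-solution of the original junction problem, so by the comparison principle in Theorem \ref{pl1}(i) the full sequence $u^\ep$ converges (not merely along subsequences) to the unique $\hat u$.

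The main obstacle I anticipate is the branch-equation matching, namely ruling out the discrepancy between $H_i$ and its flattened version $\max(H_i,m_{3-i})$. I would handle this by a localization argument near the junction: since the slope $u_{x_i}(0^-)$ lies on the decreasing part of $H_i$ (Proposition \ref{pl5}), and the value $\hat u(0)=\min_i u^{sc,i}(0)$ is realized by the branch with the \emph{largest} minimum $m_i$, the flattening level $m_{3-i}$ never exceeds the active value of $H_i$ at the relevant slopes, so the two equations agree where it matters. Verifying this compatibility precisely, rather than the symbolic manipulations, is where the real work lies; everything else is a direct application of the preceding results.
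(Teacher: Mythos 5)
Your main line of argument is exactly the paper's: Proposition \ref{pl10} is stated there as an \emph{immediate} consequence of Theorem \ref{pl9}, namely that under the max structure the junction inequality $u(0)+H(\phi_{x_1}(0),\phi_{x_2}(0),0)\geq 0$ is literally \eqref{takis3.3}, the branch equations are those of the junction problem, and the comparison principle of Theorem \ref{pl1} then identifies every subsequential limit with $\hat u$ and upgrades to full convergence. Where you depart from the paper is the ``flattening'' difficulty to which you devote your final paragraph, and this is an artifact of reading the hypothesis backwards. In the paper, $H_1$ and $H_2$ are \emph{defined} from $H$ as the marginal minima $H_1(p_1,x_1):=\min_{p_2}H(p_1,p_2,x_1,0)$, $H_2(p_2,x_2):=\min_{p_1}H(p_1,p_2,0,x_2)$ (see the display preceding Theorem \ref{pl9} and the example $H=p_1^2+10p_2^2$ that follows the proposition); the hypothesis of Proposition \ref{pl10} is that $H$ is recovered as the max of these marginals. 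Under that reading your computation $\min_{p_2}\max(H_1,H_2(p_2,0))=\max(H_1,m_2)$ combined with the definition of $H_1$ forces $H_1\geq m_2$ pointwise (and symmetrically $H_2\geq m_1$), so $\max(H_i,m_{3-i})=H_i$ identically and there is no discrepancy to reconcile: the effective branch Hamiltonians produced by Theorem \ref{pl9} are exactly the $H_i$ appearing in the max decomposition.

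Two further remarks. First, if one insists on your reading (start from given coercive $H_1,H_2$ and set $H:=\max(H_1,H_2)$), the discrepancy is genuine --- e.g.\ for $H_1(p)=p^2$, $H_2(p)=p^2-10$ the flattened Hamiltonian $\max(H_2,0)$ has a different state-constraint solution than $H_2$ --- and your proposed resolution (``the flattening level never exceeds the active value of $H_i$ at the relevant slopes'') is not carried out and is not obviously true; you yourself flag it as ``where the real work lies.'' So as written, that paragraph is a gap, but it is a gap in an argument the proposition does not require. Second, a minor point you pass over: \eqref{takis3.3} is tested with $\phi\in C^1(I)\cap C^{0,1}(\bar I)$, which allows independent one-sided slopes on the two branches, while Theorem \ref{pl9} uses $\phi\in C^1(\R^2)$; since the two branches are distinct coordinate directions in $\R^2$, any pair of slopes $(q_1,q_2)$ is realized by a globally $C^1$ test function, so the two formulations do match, but this deserves a sentence.
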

In general, however,  it is not true that $H(p_1,p_2, x_1,x_2)=\max (H_1(p_1,x_1), H_2(p_2,x_2))$. Indeed, if 
$H(p_1,p_2)= p_1^2 + 10 p_2^2,$ then  $H_1(p_1)=p_1^2 \  \text{ and } \   H_2(p_2)=10 p_2^2 \  \text{ and }   p_1^2 + 10 p_2^2 \ne \max (p_1^2,  10 p_2^2)$.
%\subsection{Relationship with other notions of junction solutions}
\smallskip

Next we use the arguments of the proof of the uniqueness of the state-constraint solutions to give a new and very simple proof of the comparison result established 
in \cite{IM} for a notion of limited flux junction solutions, which are ``parametrized'' by their values at $0$.  As in the rest of this paper we concentrate on the time-independent problem.
\smallskip

The notion of solution introduced in \cite{IM} requires the Hamiltonian's to be, in addition to coercive,  quasiconvex and the condition at the junction involves the
nondecreasing part of the Hamiltonians. To simplify the presentation, here we assume that each Hamiltonian $H_i$  is convex and has no flat parts. 
\smallskip

If $p^0_i=\text{argmin} H_i$, \cite{IM} uses  the auxiliary Hamiltonians $H_i^-(p_i,0):=H_1(p_1,0) \ \text {if } \ p_i \leq p^0_i$ and\\
$H_i^-(p_i,0):=H_i(p^0_i,0) \ \text {if } \ p_i \geq p^0_i$,  to define, for any $A\in \R$,  the $A$-flux limiter \\
$H_A(p):= \max (A, \underset {1\leq i \leq K} \max H^-_i (p_i, 0))$. % and $A$-flux limited solutions to the junction problem. 
\smallskip

The following definition was introduced in \cite{IM}.
\begin{definition} An $A$-flux limited sub (respectively super)-solution to junction problem is a viscosity sub(respectively super)-solution, for each $i$,  to $u + H_i(u_{x_i}, x_i) \  \text{in} \ I_i$ and $u+H_A(u_{x_1}, \ldots, u_{x_K})  \text{at } \  x=0.$
\end{definition} 
We remark that, in addition to the severe restriction of convexity, the $A$-flux limited solutions are classified essentially by their values at the origin and not the Kirchoff-type Neumann solution we use here, which is more natural for the interpretation of the solution.
\smallskip

Motivated by the control theoretic interpretation of the problem \cite{IM}  constructed a rather elaborate  test function to deal 
with the case that points coming up in the uniqueness proof are at the origin. %$x=0$
\smallskip

Here we present a rather simple proof for this uniqueness. To simplify the arguments we consider continuous solutions and prove the following.
\begin{proposition}\label{pl11} Let $u,v$ be continuous $A$-flux limited sub- and super-solutions respectively. Then $u\leq v$ on ${\bar I}$. 
\end{proposition}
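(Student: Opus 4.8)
The plan is to mimic the comparison proof for state-constraint solutions given earlier (Theorem~\ref{pl1}), using the doubling-of-variables technique of Soner together with the structure of the flux-limiter $H_A$. As in that proof, the coercivity assumption \eqref{takis1} guarantees that $u$ is Lipschitz continuous, so I may argue by contradiction and suppose that $M:=\max_{\bar I}(u-v)>0$. The only genuinely new feature relative to Theorem~\ref{pl1} is the condition at the junction, which here is the $A$-flux limiter rather than the state-constraint/Kirchoff condition; the interior analysis on each $I_i$ is unchanged.

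First I would localize. For each $i$, $\ep>0$ and a shift $\delta=O(\ep)$, introduce the penalized functional
\[
(x_i,y_i)\ \longmapsto\ u(x_i)-v(y_i)-\tfrac{1}{2\ep}(x_i-y_i+\delta)^2
\]
on $\bar I_i\times\bar I_i$, and let $(\bar x_i,\bar y_i)$ be a maximum point. Exactly as in the proof of Theorem~\ref{pl1}, as $\ep\to0$ one has $\bar x_i,\bar y_i\to0$, and the role of $\delta$ is to force $\bar x_i<0$ (interior) even when $\bar y_i=0$. If for some $j$ the maximizing $\bar y_j<0$, then both points lie in the interior $I_j$ and the standard interior comparison for $u+H_j(u_{x_j},x_j)$ on $\bar I_j$ produces the contradiction with $M>0$; this is identical to the corresponding step in Theorem~\ref{pl1}. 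So I may assume $\bar y_i=0$ for every $i$, which means $y\mapsto v(y)+\tfrac1{2\ep}\sum_i(\bar x_i-y_i+\delta)^2$ has a minimum at $0$.

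The key step is to exploit the $A$-flux limited super-solution property of $v$ at the origin. Writing $q_i:=(\bar x_i+\delta)/\ep$, the test-function argument gives
\[
v(0)+H_A\bigl(q_1,\dots,q_K\bigr)\ =\ v(0)+\max\Bigl(A,\ \max_{1\le i\le K}H_i^-(q_i,0)\Bigr)\ \ge\ 0.
\]
On the sub-solution side, since $\bar x_j<0$ lies in the interior, $u$ satisfies $u(\bar x_j)+H_j(q_j,\bar x_j)\le0$ for each $j$. The crucial point, which is where the flux-limiter structure (and the convexity / no-flat-part hypothesis on $H_i$) enters, is to convert the super-solution inequality involving $H_A=\max(A,\max_i H_i^-)$ into a usable pointwise inequality. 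I would argue that the $\max$ is realized either by the constant $A$ or by some $H_j^-(q_j,0)$; in the latter case one uses that $H_j^-(q_j,0)\le H_j(q_j,0)$ together with $H_j^-(q_j,0)=H_j(q_j,0)$ when $q_j$ lies on the nondecreasing branch $q_j\ge p^0_j$, and the truncation $H_j^-(q_j,0)=H_j(p^0_j,0)$ otherwise. Matching this against the sub-solution inequality $u(\bar x_j)+H_j(q_j,\bar x_j)\le0$ and letting $\ep\to0$ (so $\bar x_j\to0$, $q_j$ converging along a subsequence) yields $u(0)\le v(0)$, contradicting $M>0$.

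The main obstacle I anticipate is precisely this last matching: unlike the Kirchoff/state-constraint case of Theorem~\ref{pl1}, where one gets a clean super-solution inequality $v(0)+\max_i H_i(q_i,0)\ge0$ that directly pairs with a single sub-solution inequality, here the truncation in $H_i^-$ and the extra flux level $A$ must be reconciled with the ordinary Hamiltonian $H_j$ appearing on the sub-solution side. One must verify that the index $j$ achieving the super-solution maximum can be aligned with an interior index for which $\bar x_j<0$, and handle the subcase where the maximum equals $A$ separately. The convexity and no-flat-part assumptions ensure that the branch structure of $H_i^-$ is well-behaved, so that on the relevant (nondecreasing) branch $H_j^-$ and $H_j$ coincide and the gradients $q_j$ produced by the penalization land where the comparison closes. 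Once this bookkeeping is done, the remainder is the routine $\ep\to0$ passage familiar from state-constraint comparison.
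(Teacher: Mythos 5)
Your setup (doubling of variables as in Theorem~\ref{pl1}, reduction to the case $\bar y_i=0$ for all $i$, the super-solution inequality $v(0)+H_A(q_1,\dots,q_K)\geq 0$ with $q_i=(\bar x_i+\delta)/\ep$, and the use of $H_i^-\leq H_i$ to pass from $\max_i H_i^-$ to $\max_i H_i$ and then pair with the interior sub-solution inequality) matches the paper's argument in the case where the flux limiter is realized by one of the $H_i^-$. But there is a genuine gap in the other case: when $H_A(q_1,\dots,q_K)=A$, the super-solution inequality only gives $v(0)\geq -A$, and this contradicts nothing unless you already know that $u(0)\leq -A$. You explicitly flag this subcase as something to ``handle separately'' but never supply the argument, and without it the proof does not close.

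The missing ingredient is precisely the one new idea in the paper's proof beyond Theorem~\ref{pl1}: every continuous $A$-flux limited \emph{sub}-solution satisfies $u(0)\leq -A$. The paper proves this by testing $u$ from above at the junction with $\phi_i(x_i)=-x_i/\ep$; the resulting local maximum of $u-\phi$ cannot lie in any $I_i$ for small $\ep$ (coercivity of $H_i$ would be violated by the gradient $-1/\ep$), so it must sit at $0$, where the sub-solution condition gives $u(0)+A\leq u(0)+H_A(D\phi(0),0)\leq 0$. With this in hand, the case $H_A=A$ yields $v(0)\geq -A\geq u(0)$ and the contradiction with $\max(u-v)>0$ attained at $0$. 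Incidentally, your extended discussion of the branch structure of $H_j^-$ (where it coincides with $H_j$ versus where it is truncated at $H_j(p_j^0,0)$) is not needed: the single inequality $H_j^-\leq H_j$ already suffices to reduce the second case to the argument of Theorem~\ref{pl1}.
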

\begin{proof}  The first observation is that $u(0)\leq -A$. Indeed, for $\ep>0$ small, consider a test function $\phi \in C^1(I) \cap C^{0,1}(\bar I)$ such that  $\phi_i(x_i)=- x_i/\ep.$ It is easy to see that $u-\phi$ attains a local maximum in a neighborhood of $0$ at some point ${\bar X}:=({\bar x}_1,\dots, {\bar x}_K)$. If $\bar X \in I_i$ for some $i$, then  $u(\bar X) + H_i(-1/{\ep}, \bar X) \leq 0$, which is  not possible if $\ep$ is sufficiently small since $H_i$ is coercive. Hence $\bar X=0$ and the definition yields  $u(0) +A \leq u(0) + H_A(D\phi(0),0) \leq 0.$
\smallskip

For the comparison we follow the proof or Theorem \ref{pl1} and recall that we only need to consider the case that the maximum of the ``doubled'' function is achieved for all $i$'s at  some $(\bar x_i, 0)$ with $\bar x_i <0.$ 
\smallskip

The definition of the $A$-flux limited super-solution then yields
$v(0) + H_A(\frac{{\bar x}_1 +\delta}{\ep}, \ldots, \frac{{\bar x}_K +\delta}{\ep},0)\geq 0.$
\smallskip

If  $H_A(\frac{{\bar x}_1 +\delta}{\ep}, \ldots, \frac{{\bar x}_K +\delta}{\ep},0)=A$, then 
$v(0) + A \geq 0$, that is  $v(0) \geq -A\geq u(0)$, and we may conclude. 
\smallskip
  
If \ $H_A(\frac{{\bar x}_1 +\delta}{\ep}, \ldots, \frac{{\bar x}_K +\delta}{\ep},0)=\underset{1\leq i \leq K} \max H_i^-(\frac{{\bar x}_1 +\delta}{\ep}, \ldots, \frac{{\bar x}_K+\delta}{\ep},0)$, then \\

$v(0) + \underset{1\leq i \leq d} \max H_i(\frac{{\bar x}_1 +\delta}{\ep}, \ldots, \frac{{\bar x}_d +\delta}{\ep},0) \geq  v(0) + \underset{1\leq i \leq d} \max H^-_i(\frac{{\bar x}_1 +\delta}{\ep}, \ldots, \frac{{\bar x}_K +\delta}{\ep},0) \geq 0$,

and  we may conclude as in the proof of Theorem \ref{pl1}.

\hskip6.5in  $\blacksquare$
\end{proof}

We conclude with a proposition, which we state without a proof, which provides information about the location of the possible elements of the superdifferential at the junction of a sub-solution in $I$. An immediate consequence is that in the quasi-convex studied in \cite{IM}, there is no need to use in advance the decreasing parts of the Hamiltonians in order to define the flux-limited solution at the junction.

\begin{proposition}\label{pl6.1}  Assume that $u \in C(\bar I)$ solves $u+H(u_x,x)\leq 0$ in $I$. Then either $u$ is the state-constraint solution in $\bar I$ or $\underset{x\to 0^-} \limsup \frac{u(x)-u(0)}{x} \leq \bar P$,  where \\
$\bar P:=\inf\{z\in \R: H(z,0) \leq H(p,0) \ \text{for all} \ z\leq p \}.$  
\end{proposition}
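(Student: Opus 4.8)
The plan is to reduce the whole statement to the placement, on the graph of $H(\cdot,0)$, of a single one–sided slope at the junction, and then to read off that placement from Proposition \ref{pl5}, distinguishing according to whether or not $u$ saturates the state–constraint value at $0$.

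First I would record the two inputs that drive everything. Since $u$ is a sub-solution in $I$ and the state-constraint solution $u^{sc}$ is the maximal sub-solution, the comparison principle gives $u\le u^{sc}$ on $\bar I$, and in particular $u(0)\le u^{sc}(0)$. Applying Lemma \ref{pl6} at the endpoint $0$, the upper slope $\bar p:=\limsup_{x\to0^-}\frac{u(x)-u(0)}{x}$ satisfies $u(0)+H(\bar p,0)\le 0$. At this point the claim has become an assertion about the number $\bar p$ relative to $u(0)$, and it remains only to locate $\bar p$ on the decreasing or the increasing branch of $H(\cdot,0)$. I would also note that the definition of $\bar P$ identifies it with the smallest minimiser of $H(\cdot,0)$ — the right endpoint of the decreasing branch in the quasi-convex case — so that any slope lying on the decreasing part is automatically $\le\bar P$.

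The argument then splits on $u(0)$. If $u(0)=u^{sc}(0)$, I would show $u=u^{sc}$, which is the first alternative: this is the comparison of Theorem \ref{pl1} run on the single interval $I$, doubling the variables with the Soner-type shift $\delta=O(\ep)$ so that the maximising pair stays strictly inside $I$; since $u$ and $u^{sc}$ agree at $0$ (and at $a$) while $u^{sc}$ is a super-solution up to the boundary, one gets $u^{sc}\le u$, hence equality. If instead $c:=u(0)<u^{sc}(0)$, I would bring in the member $u_c$ of the monotone family of Proposition \ref{pl5} carrying the datum $c$ at $0$. Comparison for \eqref{takis6} identifies $u$ with $u_c$ near the junction, so that $\bar p=u_{c,x}(0^-)$, and Proposition \ref{pl5} states precisely that $u_{c,x}(0^-)$ exists, solves $c+H(u_{c,x}(0^-),0)=0$, and lies on the decreasing part of $H$, whence $u_{c,x}(0^-)\le\bar P$. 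Combining, $\bar p\le\bar P$, which is the second alternative.

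The delicate part is everything that happens exactly at $0$, where the equation cannot be tested directly. The existence of $\bar p$, and above all the fact that it sits on the decreasing branch of $H$ rather than the increasing one, must be extracted indirectly — through Lemma \ref{pl6}, the monotonicity in $c$ of both $u_c(0^-)$ and $u_{c,x}(0^-)$ granted by Proposition \ref{pl5} (or, equivalently, the one-dimensional differentiability analysis of \cite{JS}), and the junction-avoiding doubling. I expect the genuine obstacle to be the identification, in the case $c<u^{sc}(0)$, of $\bar p$ with the \emph{decreasing} root $u_{c,x}(0^-)$: a bare sub-solution bound $u(0)+H(\bar p,0)\le0$ together with $u\le u_c$ does not by itself exclude $\bar p$ on the increasing branch, so one must use the matching super-solution information at the junction (the full solution character of $u_c$, through comparison from both sides) to pin $\bar p$ to the decreasing branch. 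The borderline case $u(0)=u^{sc}(0)$ is the second sensitive point, where the maximality of $u^{sc}$ among sub-solutions is exactly what upgrades equality of boundary values to equality of the whole profiles and thereby delivers the first alternative.
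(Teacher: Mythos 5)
First, a point of reference: the paper states Proposition \ref{pl6.1} explicitly ``without a proof,'' so there is no argument of the authors to compare yours against; I can only assess the proposal on its own terms. Your opening reductions are fine and would surely be part of any proof: $u\le u^{sc}$ by comparison, $u(0)+H(\bar p,0)\le 0$ from Lemma \ref{pl6}, and the identification of $\bar P$ with the smallest global minimizer of $H(\cdot,0)$. The problem is that both branches of your case split rest on comparison inequalities run in the wrong direction, and the resulting identifications are false for bare sub-solutions. In the case $u(0)=u^{sc}(0)$ you deduce $u^{sc}\le u$ ``since $u^{sc}$ is a super-solution''; comparison between the sub-solution $u$ and the super-solution $u^{sc}$ gives only $u\le u^{sc}$, and reversing it would require $u$ to be a super-solution, which is not assumed. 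The claim itself fails: for $H(p,x)=p^2-1$ one has $u^{sc}\equiv 1$, and $u(x)=1-x^2/8$ is a sub-solution with $u(0)=1=u^{sc}(0)$ but $u\ne u^{sc}$. In the case $c:=u(0)<u^{sc}(0)$ you assert that ``comparison identifies $u$ with $u_c$ near the junction''; again comparison only yields $u\le u_c$, and a sub-solution with boundary value $c$ need not coincide with $u_c$, so there is no reason for $\bar p$ to equal the decreasing-branch root $u_{c,x}(0^-)$ supplied by Proposition \ref{pl5}.

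The second gap is not cosmetic. With $H(p,x)=p^2-1$, the function $u(x)=x/2$ is a strict classical sub-solution on $(-a,0)$ with $u(0)=0<1=u^{sc}(0)$, yet $\limsup_{x\to 0^-}\frac{u(x)-u(0)}{x}=1/2>0=\bar P$, while Proposition \ref{pl5} gives $u_{0,x}(0^-)=-1$. So the value $u(0)$ alone cannot govern the dichotomy, and the step you yourself flag as delicate --- excluding $\bar p$ from the increasing branch --- cannot be repaired by invoking the ``full solution character of $u_c$,'' because that super-solution information belongs to $u_c$ and does not transfer to $u$. (This example in fact pressures the literal statement of Proposition \ref{pl6.1} itself; the natural fixes are either to read the first alternative as ``$u$ satisfies the state-constraint super-solution condition at $0$'' or to strengthen the hypothesis to $u$ being a solution in $I$. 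Under the latter hypothesis your two-sided comparisons become legitimate and your argument via Proposition \ref{pl5} essentially closes, but as written, for mere sub-solutions, both comparison steps are invalid.)
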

%%%%%%
\section{Extensions }
\label{sec:ext}

A first extension of our results is about time dependent junction problems.

\begin{definition}\label{takis0} (i) $u \in C({\bar I} \times [0,T]; \R)$  is a state-constraint sub-solution to the junction problem if  
\begin{equation}\label{takis3.1}
u_{i,t}+H_i(u_{x_i}, x_i)\leq 0 \ \text{in} \ I_i\times (0,T] \ \text{ for each $i$.} %$i=1,\ldots, K$}.
\end{equation}
(ii)~ $u \in C({\bar I}\times [0,T]; \R)$ is a state-constraint super-solution if %to junction problem
\begin{equation}\label{takis3}
\begin{cases}
 u_{i,t}+H_i(u_{x_i}, x_i)\geq 0 \ \text{in} \ I_i, \times (0,T] \  \text{for each $i$, and} \\[1mm] %$i=1,\ldots, K$, and}\\[1mm]
 \underset {1\leq i \leq K}\max (u_{i,t}  + H_i(u_{x_i},0)) \geq 0.
\end{cases}
\end{equation}
(iii)~$u \in C({\bar I} \times [0,T]; \R)$ is a solution if it is both sub-and super-solution.
\end{definition}
As for the time independent problems discussed earlier the super-solution inequality at the junction is interpreted in the viscosity sense, that is  if, for $\phi \in C^1(I \times (0,T])\cap C^{0,1}(\bar I \times [0,T])$, $u-\phi$ has a (local) minimum at $(0,t_0)$ with $t_0\in (0,T]$, then $ \underset {1\leq i \leq K}\max \left[\phi_{i,t} (0,t_0)+ H_i(\phi_{x_i}(0,t_0),0)\right] \geq 0.$
\smallskip

%Moreover Definition \ref{takis0}  says that $u$ is a solution, if it is a viscosity solution in $|\times [0,T]$ for each $i$, and, loosely speaking,  a constraint-super-solution in $\bar I_i \times [0,T]$ for at least one $i$, which may depend on $t$.  %=1,\ldots,K$. 
%\smallskip
%

The uniqueness of solutions as well the simple proof of the uniqueness of flux-limited solutions to the time dependentent  junction problem follow after some easy modifications of the arguments presented in the previous sections. The convergence of the Kirchoff second-order approximations require some additional arguments. The details are given in 
\cite{LS1}. 
\smallskip

Other possible generalizations to the so-called ``stratified'' problems were discussed by the first author in \cite{L} and will be also presented in \cite{LS1}.  
\smallskip

The following example is a typical problem.  Consider the  domain $\Sigma:=\Sigma_1 \cup \Sigma_2$ with $\Sigma_1:=(-\infty,0) \times \R \times \{0\}$ and  $\Sigma_2:=\{0\} \times \{0\} \times (-\infty,0)$ and the coercive nonlinearities $F$ and $H$.  The equation is:
\begin{equation*}
\begin{cases}
F(u_z,z) + u=0 \ \text{in } \  \Sigma_2, \\[1mm]
H(u_x,u_y, x, y) + u=0 \  \text{in} \ \Sigma_1,\\[1mm]
H(u_x,u_y, x, y) + u\geq 0 \  \text{on} \  \partial \Sigma_1,\\[1mm]
\min(H(u_x,u_y, x, y) + u, F(u_z,z) + u) \geq 0 \ \text{at} \  \{0\} \times  \{0\} \times  \{0\}.
\end{cases}
\end{equation*}
A more general multi-dimensional example, always for coercive nonlinearities, in the domain \ $\Sigma:=\{(x,y) \in \R^{K+d}: x_i\leq 0\}$ is
\begin{equation*}
\begin{cases}
H_i(u_{x_i}, D_y u, x_i,y) + u_i= 0 \ \text{in} \ (-\infty,0) \times \R^d, \\[1mm]
\underset{1\leq i \leq K}\max H_i(u_{x_i},  D_y u, 0,y) + u \geq 0 \ \text{in} \  \{0\} \times \R^d.
\end{cases}
\end{equation*}

%%%%%%%%%%%%%%%%

%%%%%%%
\noindent ($^{1}$) College de France,
11 Place Marcelin Berthelot, 75005 Paris, 
and  
CEREMADE, 
Universit\'e de Paris-Dauphine,
Place du Mar\'echal de Lattre de Tassigny,
75016 Paris, FRANCE\\ 
email: lions@ceremade.dauphine.fr
\\ \\
\noindent ($^{2}$) Department of Mathematics 
University of Chicago, 
5734 S. University Ave.,
Chicago, IL 60637, USA\\ 
email: souganidis@math.uchicago.edu
\\ \\
($^{3}$)  Partially supported by the National Science Foundation.

\end{document}